\documentclass[12pt]{amsart}
\usepackage[utf8]{inputenc}
\usepackage{mathrsfs}
\usepackage{amsrefs}
\usepackage{enumerate}
\usepackage{amssymb,amsmath,amsthm, amscd}
\usepackage[all]{xy}
\usepackage{tikz}

\theoremstyle{plain}
\newtheorem{thm}{Theorem}[section]
\newtheorem{lem}[thm]{Lemma}

\newtheorem{defn-lem}[thm]{Definition-Lemma}

\newtheorem{prop}[thm]{Proposition}

\theoremstyle{definition}
\newtheorem{defn}[thm]{Definition}

\newtheorem{rem}[thm]{Remark}

\def\dim{\operatorname{dim}}
\def\id{\operatorname{id}}
\def\dist{\operatorname{dist}}
\def \Image{\operatorname{Image}}

\def\QT{\operatorname{QT}}
\def\T{\operatorname{T}}
\def\her{\operatorname{her}}
\def\Trdim{\operatorname{Trdim}}
\def\nuc{\operatorname{nuc}}
\newcommand{\mc}{\mathcal}

\numberwithin{equation}{section}

\begin{document}
\title [ II ]
       {On permanence of regularity properties II}

\begin{abstract}
In this article, we study the permanence of topological and algebraic dimension type properties of simple unital $C\sp*$-algebras.  When a pair of unital $C\sp*$-algebras $(A, B)$ is associated by a $*$-homomorphism $\phi: A\to B$ which is  tracially sequentially-split by order zero, we show that under suitable assumptions on either $A$ or $B$, or  both $A$ and $B$
\begin{itemize}
\item tracial $m$-comparison passes from $B$ to $A$;
\item tracial $m$-almost divisibility passes from $B$ to $A$;
\item tracial nuclear dimension less than $m$ passes from $B$ to $A$.
\end{itemize}   
\end{abstract}

\author { Hyun Ho \, Lee}

\address {Department of Mathematics\\
        University of Ulsan\\
         Ulsan, 44610 South Korea 
          }
\email{hadamard@ulsan.ac.kr}

\keywords{Tracially sequetially-split by order zero, Cuntz semigroup, Tracial nuclear dimension,  m-comparison}

\subjclass[2010]{Primary:46L55. Secondary:47C15, 46L35}
\date{}
\maketitle
\section{Introduction} 
\medskip
\noindent 
The classification program for simple nuclear $C^*$-algebras, initiated by Elliott, has reached a significant milestone with the completion of the classification for algebras with finite nuclear dimension \cite{CETWW, EGLN}. Central to this achievement is the Toms-Winter conjecture, which posits that for a simple, separable, unital, and nuclear $C^*$-algebra, the conditions of $\mathcal{Z}$-stability, strict comparison of positive elements, and finite nuclear dimension are equivalent \cite{Ro, W}.\\ 

\noindent 
In the study of these regularity properties, a recurring theme is their permanence under structural operations such as crossed products, inclusions, and tensor products. A highly successful approach to permanence was introduced by Barlak and Szabó \cite{BS}, who utilized the notion of a sequentially split $*$-homomorphism $\phi: A \to B$ to show that regularity properties pass from $B$ to $A$. However, in many important dynamical settings—particularly those involving the tracial Rokhlin property or large subalgebras \cite{P}—the splitting maps are often only available in a tracial sense.\\ 

\noindent
In the first part of this study \cite{LO}, we began an investigation into the tracial analogues of these permanence results. We focused on pairs $(A, B)$ connected by a *-homomorphism that is tracially sequentially-split. While we successfully demonstrated the permanence of $\mathcal{Z}$-absorption and strict comparison, the third pillar of the Toms-Winter conjecture—nuclear dimension—remained elusive. The primary technical obstacle was the inability of transferring the tracially small part to the norm part.  Later,  in \cite{LO1} we refined our framework by requiring the tracial splitting to be implemented by a c.p.c. order zero map in the sense of Winter and Zacharias \cite{WZ}. Order zero maps are essentially $^*$-homomorphisms from the cone of an algebra, making them the ''correct" class of maps for preserving orthogonality and Cuntz-semigroup data. By requiring the map to be tracially sequentially-split by order zero, we develop a unified conceptual framework that encompasses both:
\begin{enumerate}
\item Actions of finite groups with the weak tracial Rokhlin property, and
\item Inclusions of $C^*$-algebras $P \subset A$ with a conditional expectation $E: A \to P$ has the weak tracial Rokhlin property
\end{enumerate}
\medskip

\noindent 
While the finiteness of nuclear dimension is a rigid structural property, tracial nuclear dimension is specifically designed to manage the tracially negligible remainder. Since this objective aligns precisely with our framework, the permanence of this  invariant is natural and expected consequence. Our main results show that if $B$ is a simple unital $C^*$-algebra with tracial $m$-comparison, tracial $m$-almost divisibility, or tracial nuclear dimension at most $m$, then these properties pass to $A$ whenever $A$ is tracially sequentially-split in $B$ by an order zero map. It is worth noting that tracial $m$-comparison and $m$-almost divisibility serve as algebraic measurements of the structural regularity of a $C^*$-algebra. While the permanence of these algebraic invariants provides a necessary foundation, the proof for the tracial nuclear dimension is highly technical; it requires a sophisticated coordination of order zero splitting and orthogonal lifting within the tracial ultrapower to preserve topological complexity while avoiding dimension inflation. Thus this work effectively bridges the gap between the algebraic and topological facets of our framework, completing the tracial regularity program initiated in \cite{LO1}.\\

\noindent 
This article is organized as follows. In Section 2, we review the necessary background on Cuntz subequivalence and the structure of order zero maps \cite{WZ}. In Section 3, we prove our main permanence theorems, providing a comprehensive tracial counterpart to the classical Barlak-Szabó framework and further solidifying the stability of regularity properties within the Elliott program.    
          
\section{Preliminaries}\label{S:2}

In this section, we review the fundamental tools and notation necessary for our study. We begin by recalling the definition and basic properties of the Cuntz subequivalence, which provides the primary order structure on the Cuntz semigroup. Furthermore, we establish the technical background for completely positive contractive (c.p.c.) maps of order zero, as their structural rigidity is essential for the lifting arguments employed in our permanence results.
\begin{defn}
Let $A$ be a $C^*$-algebra. For positive elements $a, b \in A_+$, we say $a$ is \textit{Cuntz subequivalent} to $b$, denoted $a \lesssim b$, if there exists a sequence $\{v_n\}_{n=1}^{\infty} \subset A$ such that 
\[ \lim_{n \to \infty} \| v_n^* b v_n - a \| = 0. \]
\end{defn}

\begin{defn}[Dimensional Independence in $M_{\infty}(A)$]
Let $M_{\infty}(A) = \bigcup_{n=1}^{\infty} M_n(A)$ be the algebraic limit of matrix algebras over $A$ under the upper-left inclusions $a \mapsto \begin{pmatrix}  a & 0 \\ 0 & 0 \end{pmatrix}$. For $a \in M_n(A)_+$ and $b \in M_m(A)_+$, the relation $a \lesssim b$ is defined as follows:
\begin{enumerate}
    \item Let $k \ge \max(n, m)$. Let $\iota_n: M_n(A) \to M_k(A)$ and $\iota_m: M_m(A) \to M_k(A)$ be the natural inclusions.
    \item We define $a \lesssim b$ if and only if $\iota_n(a) \lesssim \iota_m(b)$ holds in the $C^*$-algebra $M_k(A)$.
\end{enumerate}
This relation is independent of the choice of $k \in \mathbb{N}$.
\end{defn}

\begin{prop}[Rectangular Matrix Characterization]
For $a \in M_n(A)_+$ and $b \in M_m(A)_+$, the following are equivalent:
\begin{itemize}
    \item $a \lesssim b$ in $M_{\infty}(A)$.
    \item There exists a sequence of rectangular matrices $\{v_k\}_{k=1}^{\infty} \subset M_{m,n}(A)$ such that $v_k^* b v_k \to a$ in norm.
    \item For every $\varepsilon > 0$, there exists $v \in M_{m,n}(A)$ such that $(a - \varepsilon)_+ = v^* b v$.
\end{itemize}
\end{prop}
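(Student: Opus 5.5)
The plan is to prove the cycle (iii) $\Rightarrow$ (ii) $\Rightarrow$ (i) $\Rightarrow$ (iii). Throughout, fix $k\ge\max(n,m)$, write $\tilde a=\iota_n(a)$ and $\tilde b=\iota_m(b)$ in $M_k(A)$, and let $e_n\in M_{k,n}(\mathbb{C})$ and $e_m\in M_{k,m}(\mathbb{C})$ denote the scalar isometries onto the first $n$ (resp. $m$) coordinates. Then $\tilde a=e_nae_n^*$, $\tilde b=e_mbe_m^*$, and $e_n^*e_n=1_n$, $e_m^*e_m=1_m$. The basic tool is that these corner maps convert between square matrices in $M_k(A)$ and rectangular matrices in $M_{m,n}(A)$. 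For $w\in M_k(A)$ we have
\[
e_n^*w^*\tilde b\,we_n=(e_m^*we_n)^*\,b\,(e_m^*we_n),
\]
and conversely, for $v\in M_{m,n}(A)$, the element $w=e_mve_n^*\in M_k(A)$ satisfies $w^*\tilde bw=e_n(v^*bv)e_n^*$.

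For (iii) $\Rightarrow$ (ii), I take $\varepsilon=1/j$ and obtain $v_j$ with $v_j^*bv_j=(a-1/j)_+$. Since $\|(a-1/j)_+-a\|\le 1/j$, the sequence $v_j^*bv_j$ converges to $a$. For (ii) $\Rightarrow$ (i), I set $w_j=e_mv_je_n^*$. Then $w_j^*\tilde bw_j=e_n(v_j^*bv_j)e_n^*\to e_nae_n^*=\tilde a$, which is exactly $\tilde a\lesssim\tilde b$ in $M_k(A)$.

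The direction (i) $\Rightarrow$ (iii) carries the real content. First I would establish the standard square-matrix fact (R\o rdam's lemma): if $x,y\in D_+$ in a $C^*$-algebra $D$ and $x\lesssim y$, then for each $\varepsilon>0$ there is $r\in D$ with $(x-\varepsilon)_+=r^*yr$. The argument runs as follows. Choose $w$ with $\|w^*yw-x\|<\delta$ for a small $\delta<\varepsilon$. By the perturbation lemma (if $\|x-c\|<\delta$ then $(x-\delta)_+=d^*cd$ for some contraction $d$), we get $(x-\delta)_+=d^*w^*ywd$. Now apply functional calculus: $(x-\varepsilon)_+=g((x-\delta)_+)$ for a continuous $g\ge0$ vanishing at $0$, and we may write $g(t)=t\,h(t)^2$ with $h$ continuous. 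Then
\[
(x-\varepsilon)_+=h((x-\delta)_+)\,d^*w^*ywd\,h((x-\delta)_+),
\]
so $r=wd\,h((x-\delta)_+)$ works. Applying this in $D=M_k(A)$ gives $(\tilde a-\varepsilon)_+=r^*\tilde br$. Since functional calculus commutes with the corner embedding, $(\tilde a-\varepsilon)_+=e_n(a-\varepsilon)_+e_n^*$. Compressing by $e_n$ on both sides and using the first identity above, $v=e_m^*re_n\in M_{m,n}(A)$ satisfies $(a-\varepsilon)_+=v^*bv$.

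The main obstacle is the perturbation lemma inside R\o rdam's lemma, namely producing the contraction $d$. To get it, I would first use $x\le c+\delta$ to show that $(x-\delta)_+=z^*cz$ for suitable $z$ built from $(x-\delta)_+^{1/2}$ and a functional calculus of $c$, passing to a limit of $(c+1/j)^{-1/2}$-type factors. Alternatively, since the paper treats this as background, I would simply cite R\o rdam's lemma, and the remaining steps are the purely formal corner compressions described above. Independence of $k$ then follows automatically, because condition (ii) does not mention $k$ at all.
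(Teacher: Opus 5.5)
The paper gives no proof of this proposition. It is listed among the standard preliminaries (it is R\o rdam's lemma combined with the Kirchberg--R\o rdam perturbation lemma), so there is no argument in the paper to compare yours with. Your proof is correct and is the standard one:
\begin{itemize}
\item the two corner identities for $e_n,e_m$ handle all passage between $M_k(A)$ and $M_{m,n}(A)$;
\item (iii)$\Rightarrow$(ii)$\Rightarrow$(i) is immediate;
\item in (i)$\Rightarrow$(iii), the step $(x-\varepsilon)_+=h\bigl((x-\delta)_+\bigr)\,d^*w^*ywd\,h\bigl((x-\delta)_+\bigr)$ with $g(t)=(t-(\varepsilon-\delta))_+=t\,h(t)^2$ is right, since $h$ is continuous because $g$ vanishes on $[0,\varepsilon-\delta]$;
\item running the cycle at a fixed $k$ also gives the independence of $k$ asserted in the paper's definition.
\end{itemize}

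The one place to be careful is the perturbation lemma: $\|x-c\|<\delta$ implies $(x-\delta)_+=d^*cd$ for a contraction $d$ in the algebra. You should cite it (Kirchberg--R\o rdam, Lemma 2.2) rather than derive it from norm limits of $(c+1/j)^{-1/2}$-type factors. The obvious candidates, such as $(c+1/j)^{-1/2}(x-\delta)_+^{1/2}$, need not converge in norm.

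For example, in $C([0,1],M_2)$ take $c(t)=\mathrm{diag}(1,t)$, $x(t)=\begin{pmatrix}1&s(t)\\ s(t)&t\end{pmatrix}$ with $s(t)=\tfrac1{20}\sqrt t\,\sin(1/t)$, and $\delta=\tfrac1{10}$. Then $x\ge 0$ and $\|x-c\|\le\tfrac1{20}<\delta$. However, the pointwise limit of $(c+1/j)^{-1/2}(x-\delta)_+^{1/2}$ has an off-diagonal entry asymptotic to a nonzero multiple of $\sin(1/t)$ as $t\to 0$. This limit is therefore discontinuous, so the sequence is not norm convergent. An exact contraction $d$ does exist in this example, but it must be chosen differently; this is precisely what the cited lemma guarantees.
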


\begin{defn}[Cuntz Equivalence]
Two positive elements $a, b \in M_{\infty}(A)_+$ are said to be \textit{Cuntz equivalent}, denoted $a \sim b$, if 
\[ a \lesssim b \quad \text{and} \quad b \lesssim a. \]
This is an equivalence relation on the set $M_{\infty}(A)_+$. 
\end{defn}
The following lemma is borrowed from \cite[Lemma 1.4]{P} and summarizes well-known facts about Cuntz subequivalence. We denote by $A^{+} $ the unitization of a $C\sp*$-algebra $A$. 
\begin{lem} \label{lem:cuntz_properties}
Let $A$ be a $C^*$-algebra.
\begin{enumerate}
    \item Let $a, b \in A_+$. If $a \in \overline{bAb}$, then $a \lesssim b$.
    \item Let $a \in A_+$ and let $f: [0, \infty) \to [0, \infty)$ be a continuous function such that $f(0) = 0$. Then $f(a) \lesssim a$.
    \item Let $a \in A_+$ and let $f: [0, \|a\|] \to [0, \infty)$ be a continuous function such that $f(0) = 0$ and $f(\lambda) > 0$ for $\lambda > 0$. Then $f(a) \sim  a$.
    \item Let $c \in A$. Then $c^*c \sim cc^*$.
    \item Let $a \in A_+$ and let $u \in A$ be a unitary. Then $uau^* \sim  a$.
    \item Let $c \in A$ and let $\alpha > 0$. Then $(c^*c - \alpha)_+ \sim  (cc^* - \alpha)_+$.
    \item Let $v \in A$. Then there is an isomorphism $\phi: \overline{v^*vAv^*v} \to \overline{vv^*Avv^*}$ such that for every positive element $z \in \overline{v^*vAv^*v}$, we have $z \sim  \phi(z)$.
    \item Let $a \in A_+$ and let $\varepsilon_1, \varepsilon_2 > 0$. Then $((a - \varepsilon_1)_+ - \varepsilon_2)_+ = (a - (\varepsilon_1 + \varepsilon_2))_+$.
    \item Let $a, b \in A_+$ satisfy $a \lesssim b$ and let $\delta > 0$. Then there exists $v \in A$ such that $v^*v = (a - \delta)_+$ and $vv^* \in \overline{bAb}$.
    \item Let $a, b \in A_+$. Then $\|a - b\| < \varepsilon$ implies $(a - \varepsilon)_{+} \lesssim  b$.
    \item Let $a, b \in A_+$. The following are equivalent:
    \begin{enumerate}
        \item $a \lesssim b$.
        \item $(a - \varepsilon)_{+} \lesssim b$ for all $\varepsilon > 0$.
        \item For every $\varepsilon > 0$, there exists $\delta > 0$ such that $(a - \varepsilon)_{+} \lesssim (b - \delta)_+$.
    \end{enumerate}
    \item Let $a, b \in A_+$. Then $a + b \lesssim a \oplus b$.
    \item Let $a, b \in A_+$ be orthogonal (i.e., $ab = 0$). Then $a + b \sim a \oplus b$.
    \item Let $a_1, a_2, b_1, b_2 \in A_+$. If $a_1 \lesssim a_2$ and $b_1 \lesssim b_2$, then $a_1 \oplus b_1 \lesssim a_2 \oplus b_2$.
\end{enumerate}
\end{lem}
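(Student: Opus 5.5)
This lemma collects standard facts, so the plan is to verify the items in an order where later ones reuse earlier ones, working throughout in the unitization $A^{+}$ when a unit is convenient. Cuntz subequivalence in $A$ and in $A^{+}$ agree for elements of $A$: if $v_n^*bv_n\to a$ with $v_n\in A^{+}$, then $(b^{1/2}v_n)^*(b^{1/2}v_n)\to a$ with $b^{1/2}v_n\in A$, and we can replace $v_n$ by $v_n b^{1/2}(b+1/k)^{-1/2}$-type corrections, or simply note $b^{1/2}v_n\in A$ and use $b^{1/2}$ absorbed.

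\emph{Basic items.} For (1), an approximate unit of $\overline{bAb}$ of the form $f_k(b)$ gives $f_k(b)^{1/2}a f_k(b)^{1/2}\to a$, and $f_k(b)$ is a limit of elements $b\,g(b)\,b$, which yields the implementing sequence. Item (2) follows from (1), since $f(a)\in \overline{aAa}$. For (3), the hypothesis gives $\overline{f(a)Af(a)}=\overline{aAa}$ (both have the same support), so (1) applies in both directions. Item (4) comes from polar decomposition: with $c=v|c|$ in $A^{**}$, the elements $v(c^*c)^{1/2}g(c^*c)$ lie in $A$ and implement the relation; concretely, $c(c^*c+1/n)^{-1/2}$ conjugates $cc^*$ approximately onto $c^*c$, and symmetrically. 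Item (5) is a special case of (4) with $c=ua^{1/2}$. Item (8) is functional calculus, i.e.\ the identity $((t-\varepsilon_1)_+-\varepsilon_2)_+=(t-\varepsilon_1-\varepsilon_2)_+$.

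\emph{Polar-decomposition items.} For (6), write $c=v|c|$ and observe that $v f(c^*c)v^*=f(cc^*)$ for every continuous $f$ vanishing at $0$; then (4) applied to $c\,h(c^*c)$ with $h(t)^2t=(t-\alpha)_+$ gives the claim. Item (7) is $\phi(z)=vzv^*$, using the partial isometry in $A^{**}$, which maps the hereditary subalgebra $\overline{v^*vAv^*v}$ into $A$ by the same functional-calculus trick; the Cuntz equivalence $z\sim\phi(z)$ is then (4) with $c=vz^{1/2}$.

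\emph{Main obstacle: (9), (10), (11).} The key estimate is Rørdam's lemma. If $\|a-b\|<\varepsilon$, take $g$ continuous with $g(b)\,b\,g(b)$ close to the spectral projection of $b$ beyond $\varepsilon$; then $(a-\varepsilon)_+ = d^*bd$ for some contraction $d$. The standard route is to show $(a-\varepsilon)_+\le$ a compression of $b$, using the inequality $a\le b+\varepsilon$ and the fact that $x\le y$ implies $x=r^*yr$ up to cutting down by $(x-\delta)_+$.

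I expect this to be the hardest step. The crucial ingredient is the lemma that $x\le y$ implies that for each $\delta>0$ there is $r$ with $(x-\delta)_+=r^*yr$; I would take $r=y^{1/2}$-factorization via $x^{1/2}=uy^{1/2}$ in $A^{**}$, with $u$ corrected to lie in $A$.

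Given (10), the remaining items follow.
\begin{itemize}
\item[(11)] If $a\lesssim b$, choose $v$ with $\|v^*bv-a\|<\varepsilon/2$; applying (10) and then (8) gives (a)$\Rightarrow$(b). For (b)$\Rightarrow$(c), pick $\delta$ small so that $\|v^*(b-\delta)_+v-v^*bv\|$ is small and apply (10) again. For (c)$\Rightarrow$(a), let $\varepsilon\to0$.
\item[(9)] From $(a-\delta/2)_+\approx$ a conjugate of $b$, use (11) to write $(a-\delta)_+=w^*bw$, set $v=b^{1/2}w$, and note $vv^*\in\overline{bAb}$.
\item[(12)] Take $c$ equal to the row $(a^{1/2}\ b^{1/2})$; then $cc^*=a+b$ and $c^*c\sim a\oplus b$ by (4).
\item[(13)] The column $x=(a^{1/2},b^{1/2})^T$ satisfies $x^*x=a+b$, while orthogonality gives $xx^*=a\oplus b$.
\item[(14)] Use block-diagonal implementing sequences $v_n\oplus w_n$.
\end{itemize}
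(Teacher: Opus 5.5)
The paper gives no argument for this lemma; it is quoted from Phillips' Lemma 1.4. So your derivation has to stand on its own. Most items are fine, but (9) is not actually proved. It needs the exact identity $v^*v=(a-\delta)_+$, and ``use (11) to write $(a-\delta)_+=w^*bw$'' does not deliver that, because (10) and (11) only assert $\lesssim$. What you are really invoking is the exact Kirchberg--R\o rdam lemma ($\|a-b\|<\varepsilon \Rightarrow (a-\varepsilon)_+=d^*bd$ with $d\in A$). Your sketch of it (factor $x^{1/2}=uy^{1/2}$ in $A^{**}$, then ``correct $u$ to lie in $A$'') omits precisely the hard step: the canonical factor, even after cutting down by a function of $x$, need not lie in $A$. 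For example, take $C([0,1],M_2)$ with $y(s)=\mathrm{diag}(1,s)$, $x(s)=\tfrac12\xi(s)\xi(s)^*$ and $\xi(s)=(\sqrt{1-s},\sqrt{s})$; then $x\le y$. Let $u$ be the weak$^*$-limit of $x^{1/2}(y+1/n)^{-1/2}$ and $g(x)xg(x)=(x-\eta)_+$ with $\eta<\tfrac12$. Then $u^*g(x)$ tends to $\lambda(e_1+e_2)e_1^*$ as $s\to 0^+$ but equals $\lambda e_1e_1^*$ at $s=0$ ($\lambda\neq 0$), so it is not in $A$. For (10) none of this is needed: $x\le y$ already gives $x\in\overline{yAy}$, so (1) applies. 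For (9) you can avoid exact factorization by transporting along polar decompositions as in (7). Choose $r$ with $\|r^*br-a\|<\delta$, and cut down by a function of $a$ as in your (10) sketch to get $(a-\delta)_+\le h(a)\,r^*br\,h(a)=s^*s$ with $s=(r^*br)^{1/2}h(a)$; hence $(a-\delta)_+\in\overline{s^*sAs^*s}$. Write $s=V|s|$ and $b^{1/2}r=U|b^{1/2}r|$ in $A^{**}$. Then $v=UV(a-\delta)_+^{1/2}$ lies in $A$, satisfies $v^*v=(a-\delta)_+$, and has $vv^*\in\overline{bAb}$.

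In (12), the claim ``$c^*c\sim a\oplus b$ by (4)'' is false. Item (4) only gives $c^*c\sim cc^*=a+b$, and for $A=\mathbb{C}$, $a=b=1$, the matrix $c^*c$ has rank one while $a\oplus b$ has rank two. The missing step is the inequality $c^*c\le 2(a\oplus b)$: the difference is $y^*y$ for the row $y=(a^{1/2},\,-b^{1/2})$, so $c^*c\lesssim a\oplus b$ by (1). Finally, in (7) you should build $\phi$ and $c$ from the partial isometry $W$ of $v=W|v|$ rather than from $v$ itself, since $z\mapsto vzv^*$ is not multiplicative. You should also note that $Wz^{1/2}\in A$ because $z^{1/2}\in\overline{|v|A}$.
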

\begin{defn}[The Semigroup $W(A)$]
The \textit{Cuntz semigroup} of $A$, denoted by $W(A)$, is the set of Cuntz equivalence classes of positive elements in $M_{\infty}(A)$:
\[ W(A) := M_{\infty}(A)_+ / \sim = \left\{ \langle a \rangle : a \in M_{\infty}(A)_+ \right\}. \]
The set $W(A)$ is equipped with the following structure:
\begin{enumerate}
    \item \textbf{Addition:} For $\langle a \rangle, \langle b \rangle \in W(A)$, the addition is defined via the direct sum:
    \[ \langle a \rangle + \langle b \rangle := \langle a \oplus b \rangle, \]
    where $a \oplus b = \text{diag}(a, b) \in M_{n+m}(A)$ for $a \in M_n(A)$ and $b \in M_m(A)$.
    \item \textbf{Partial Order:} $W(A)$ is an ordered abelian semigroup where the order is induced by the subequivalence relation:
    \[ \langle a \rangle \le \langle b \rangle \iff a \lesssim b. \]
\end{enumerate}
\end{defn}

\begin{prop}[Well-definedness]
The operations defined above are well-defined:
\begin{itemize}
    \item If $a_1 \sim a_2$ and $b_1 \sim b_2$, then $a_1 \oplus b_1 \sim a_2 \oplus b_2$.
    \item The addition is associative and commutative within $W(A)$.
    \item The zero element is $\langle 0 \rangle$, the class of the zero matrix.
\end{itemize}
\end{prop}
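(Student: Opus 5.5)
To prove the well-definedness proposition, I will work throughout with the rectangular-matrix description of $\lesssim$ in $M_\infty(A)$ (the Rectangular Matrix Characterization) and with item (14) of Lemma~\ref{lem:cuntz_properties}. Most of the argument consists of producing explicit block matrices that implement the required subequivalences.

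\textbf{Compatibility with direct sums.} Suppose $a_1 \sim a_2$ and $b_1 \sim b_2$, with $a_i \in M_{n_i}(A)$ and $b_i \in M_{m_i}(A)$. Then $a_1 \lesssim a_2$ and $b_1 \lesssim b_2$, so Lemma~\ref{lem:cuntz_properties}(14) gives $a_1\oplus b_1 \lesssim a_2 \oplus b_2$. The reverse inequality follows in the same way, and together these give $a_1\oplus b_1\sim a_2\oplus b_2$.

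Item (14) is stated for elements of $A$, but the matrices here have different sizes. To handle this I will verify it directly in the rectangular setting. Choose sequences $v_k \in M_{n_2,n_1}(A)$ and $w_k \in M_{m_2,m_1}(A)$ with
\[
v_k^*a_2v_k\to a_1, \qquad w_k^*b_2w_k\to b_1 .
\]
Set $u_k=\mathrm{diag}(v_k,w_k)\in M_{n_2+m_2,\,n_1+m_1}(A)$. Then
\[
u_k^*(a_2\oplus b_2)u_k=\mathrm{diag}(v_k^*a_2v_k,\;w_k^*b_2w_k)\to a_1\oplus b_1 .
\]
Since the Definition of $\lesssim$ in $M_\infty(A)$ is independent of $k$, padding by zeros does not affect the relation.

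\textbf{Commutativity and associativity.} For commutativity, let $a \in M_n(A)$ and $b \in M_m(A)$, and let $u=\begin{pmatrix}0&1_m\\ 1_n&0\end{pmatrix}$. This is a permutation matrix with $u^*(a\oplus b)u=b\oplus a$, and $u^*u=1_{n+m}$.

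If $A$ is not unital, the scalar entries of $u$ are not in $A$. In that case I will work in $M_{n+m}(A^+)$, where $u$ is a genuine unitary. By Lemma~\ref{lem:cuntz_properties}(5), $u(a\oplus b)u^*\sim a\oplus b$ there. To bring this back to $A$, I will instead use $v_k=u\,(e_\lambda\otimes 1)$ for an approximate unit $(e_\lambda)$ of $A$, which has entries in $A$ and satisfies
\[
v_k^*(a\oplus b)v_k\to b\oplus a .
\]
The symmetric argument gives the reverse subequivalence.

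Associativity is immediate, because $(a\oplus b)\oplus c$ and $a\oplus(b\oplus c)$ are literally the same block-diagonal matrix $\mathrm{diag}(a,b,c)$.

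\textbf{Zero element.} I need $a\oplus 0\sim a$. This holds because $a\oplus 0$ is just $\iota(a)$ under the upper-left inclusion, and by the Definition, $\lesssim$ is computed after embedding into a common $M_k(A)$. Finally, any zero matrix is equivalent to $0\in A$, since $0\lesssim$ everything and $v^*0v=0$.

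\textbf{Main obstacle.} The only real subtlety is the nonunital case in the commutativity step. There the permutation unitary must be approximated by elements of $A$ via an approximate unit, and one has to check that
\[
(e_\lambda\otimes1)(b\oplus a)(e_\lambda\otimes1)\to b\oplus a ,
\]
which holds by the defining property of the approximate unit. Everything else is bookkeeping with block matrices.
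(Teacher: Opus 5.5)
Your proposal is correct. The paper states this proposition as standard background and gives no proof of its own, so there is no competing argument to compare against. Your arguments are the standard ones: block-diagonal implementers $\mathrm{diag}(v_k,w_k)$ for the first bullet, the literal equality with $\mathrm{diag}(a,b,c)$ for associativity, and the upper-left inclusion for the zero element. The first bullet also quietly handles the fact that $a\in M_n(A)$ may be regarded in any larger $M_{n'}(A)$, since $a\sim\iota(a)$.

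Two small corrections.

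First, your displayed $u$ has row blocks of sizes $m,n$ and column blocks of sizes $n,m$. For this $u$ the correct identity is $u(a\oplus b)u^*=b\oplus a$. The expression $u^*(a\oplus b)u$ is not $b\oplus a$ when $n\neq m$; it scrambles entries of $b$. So swap $u$ and $u^*$ throughout, including in your $v_k$.

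Second, the nonunital detour through an approximate unit is unnecessary. Put $c=(a\oplus b)^{1/2}u^*$. This lies in $M_{n+m}(A)$ whether or not $A$ is unital. Then $c^*c=u(a\oplus b)u^*=b\oplus a$ and $cc^*=a\oplus b$, so Lemma~\ref{lem:cuntz_properties}(4) gives $a\oplus b\sim b\oplus a$ in one step. If you keep the approximate-unit version, note that $(e_\lambda)$ is a net. You should therefore choose $\lambda_k$ so that the error is below $1/k$, to obtain an actual sequence $v_k$.
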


\begin{defn}\label{D:dimensionfunction}
For a unital $C\sp*$-algebra $A$, we denote by $\QT(A)$ the set of normalized 2-quasitraces on $A$. When $A$ is stably finite, we define $d_{\tau}: M_{\infty}(A)_{+} \to [0, \infty)$  by $d_{\tau}(a)=\lim_{n \to \infty} \tau(a^{1/n})$ for $a \in M_{\infty}(a)_{+}$ where $\tau \in \QT(A)$. 
\end{defn}

\begin{prop}
Let $A$ be a stably finite unital $C\sp*$-algebra. Then $d_{\tau}$ in Definition \ref{D:dimensionfunction} is well-defined on $W(A)$. That is, if $a, b \in M_{\infty}(A)_{+}$ such that $a \sim b$, then $d_{\tau}(a)=d_{\tau}(b)$. 
\end{prop}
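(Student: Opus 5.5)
To prove that $d_\tau$ is constant on Cuntz classes, I would show the stronger monotonicity statement: if $a \lesssim b$ in $M_\infty(A)_+$, then $d_\tau(a) \le d_\tau(b)$. Applying it in both directions gives the claim. Because the relation on $M_\infty(A)$ is defined by passing to a common $M_k(A)$, I may assume $a, b \in M_k(A)_+$. Here $\tau$ is extended to $M_k(A)$ in the standard way, using the fact that a normalized 2-quasitrace extends to matrix algebras. First I record that $d_\tau(a) = \lim_n \tau(a^{1/n})$ exists: the functions $t^{1/n}$ increase to $\chi_{(0,\infty)}$ on the spectrum, and $\tau$ is monotone on commuting positive elements, so the sequence is increasing and bounded by $\tau$ of a unit in $M_k$. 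The same reasoning gives $d_\tau(a) = \lim_{\varepsilon\to 0} \tau(f_\varepsilon(a)) = \sup_\varepsilon d_\tau((a-\varepsilon)_+)$ for continuous cutoff functions $f_\varepsilon$.

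\textbf{Step 1.} By Lemma \ref{lem:cuntz_properties}(11), it suffices to show $d_\tau((a-\varepsilon)_+) \le d_\tau(b)$ for every $\varepsilon>0$. By Lemma \ref{lem:cuntz_properties}(9), there is $v$ with $v^*v = (a-\varepsilon)_+$ and $vv^* \in \overline{bM_k(A)b}$.

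\textbf{Step 2.} Show $d_\tau(v^*v) = d_\tau(vv^*)$. Quasitraces satisfy $\tau(x^*x) = \tau(xx^*)$. Apply this with $x = (v^*v)^{1/(2n)}$-type factors: $(v^*v)^{1/n}$ and $(vv^*)^{1/n}$ are $g(v^*v)$ and $g(vv^*)$ for $g(t)=t^{1/n}$. Writing $v = u|v|$ formally, $g(vv^*) = v\,h(v^*v)\,v^*$ with $h(t) = t^{1/n-1}$ regularized by a cutoff. It is cleaner to use the functions $f_\delta$ vanishing near $0$: then $f_\delta(vv^*) = y y^*$ and $f_\delta(v^*v) = y^*y$ with $y = v\,k_\delta(v^*v)$, where $k_\delta$ is continuous and $k_\delta(t)^2 t = f_\delta(t)$. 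Hence $\tau(f_\delta(v^*v)) = \tau(f_\delta(vv^*))$. Letting $\delta\to0$ gives equality of $d_\tau$.

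\textbf{Step 3.} Show that $c \in \overline{bM_k(A)b}_+$ implies $d_\tau(c)\le d_\tau(b)$. Here $b^{1/n}$ is an approximate unit for the hereditary subalgebra. For $f_\delta(c)$ with $\|c\|\le1$, the element $f_\delta(c)$ is approximately $b^{1/n}f_\delta(c)b^{1/n}$. Moreover $f_\delta(c) \le 1$ and is supported where a cutoff $g(c)$ acts as a unit, so $f_\delta(c) \le g(c)$ lies in a hereditary subalgebra in which $b^{1/m}$ almost dominates. The cleanest route is to use lower semicontinuity together with $\tau(x) \le \tau(y)$ whenever $0\le x\le y$. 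I would estimate $\tau(f_\delta(c)) \approx \tau(f_\delta(c)^{1/2} b^{1/n} f_\delta(c)^{1/2})$, which equals $\tau(b^{1/2n} f_\delta(c) b^{1/2n})$ by the $x^*x/xx^*$ identity, and this is at most $\tau(b^{1/n}) \le d_\tau(b)$. The approximation step uses continuity of $\tau$ in norm on bounded sets. This is automatic for traces, and for 2-quasitraces it follows from the known Lipschitz-type estimates (Blackadar–Handelman). Combining Steps 1–3 gives $d_\tau((a-\varepsilon)_+) = d_\tau(vv^*) \le d_\tau(b)$, and then letting $\varepsilon\to0$ yields the result.

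\textbf{Main obstacle.} The main difficulty is Step 3, together with the quasitrace issues it raises. A quasitrace is only additive on commuting elements, so norm-continuity and monotonicity must be quoted carefully from the literature. If this proves troublesome, I would fall back on the known identification of $d_\tau$ with a lower semicontinuous dimension function and cite it.
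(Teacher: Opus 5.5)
The paper gives no proof of this proposition. It is recorded in the preliminaries as a standard fact (going back to Blackadar--Handelman), so there is no argument of the paper's to compare yours with.

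Your proof is the standard one and it is correct. You reduce via Lemma~\ref{lem:cuntz_properties}(9) to $v^*v=(a-\varepsilon)_+$ with $vv^*\in\overline{bM_k(A)b}$. You then use $y=v\,k_\delta(v^*v)$ to get $\tau(f_\delta(v^*v))=\tau(f_\delta(vv^*))$. Finally you bound $\tau(f_\delta(c))$ by $\lim_n\tau(b^{1/n})$, using that $(b^{1/n})$ is an approximate unit for $\overline{bM_k(A)b}$ together with the identity $\tau(x^*x)=\tau(xx^*)$ for $x=b^{1/2n}f_\delta(c)^{1/2}$.

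A few points to tidy:

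\emph{Normalization of $a$.} The sequence $\tau(a^{1/n})$ is increasing only when $\|a\|\le 1$. In general, argue through the measure that $\tau$ induces on $C^*(a,1)$, where $\tau$ is linear, and use dominated convergence. Alternatively, normalize $a$, since $d_\tau(\lambda a)=d_\tau(a)$.

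\emph{Normalization of $b$.} The bound $\tau(b^{1/n})\le d_\tau(b)$ likewise needs $\|b\|\le 1$. You only use it in the limit $n\to\infty$, however, and there it is simply the definition of $d_\tau(b)$.

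\emph{Step 1.} The reduction there is really $d_\tau(a)=\sup_{\varepsilon}d_\tau((a-\varepsilon)_+)$ combined with (9) applied directly to $a\lesssim b$. Item (11) is not needed.

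\emph{Step 3, the substantive point.} The inequality $\tau(b^{1/2n}f_\delta(c)b^{1/2n})\le\tau(b^{1/n})$ compares non-commuting elements. For a genuine $2$-quasitrace you must therefore cite the Blackadar--Handelman monotonicity theorem for quasitraces. Once you have it, norm continuity comes for free in the unital setting: for positive $x,y$ one has $x\le y+\|x-y\|\cdot 1$, and linearity on $C^*(y,1)$ then gives $|\tau(x)-\tau(y)|\le\|x-y\|\,\tau(1)$.

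Under the paper's standing exactness assumption, $\QT(A)=\T(A)$, so all of these quasitrace facts become immediate.
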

From now on, we will be concerned with only the class of exact $C\sp*$-algebras $A$,  and thus $\QT(A)=\T(A)$ the set of tracial states of $A$ in this case. \\  
\begin{defn}(Winter-Zacharias)
Let $A$ and $B$ be $C^*$-algebras. A completely positive contractive (c.p.c.) map $\phi: A \to B$ is said to be of \textit{order zero} if it preserves orthogonality. That is, for any $a, b \in A_+$,
\[ a \cdot b = 0 \implies \phi(a) \cdot \phi(b) = 0. \]
\end{defn}
We shall abbreviate a c.p.c order zero map as an order zero map without confusion. The following theorem is about the structure of an order zero map. 

\begin{thm} \cite[Corollary 4.1]{WZ}\label{T:semiprojective}
Let $A$ and $B$ be $C\sp*$-algebras, and $\phi:A \to B$ an order zero map. Then the map given by $\varrho_{\phi} (\id_{(0,1]}\otimes a):=\phi(a)$ induces a $*$-homomorphism $\varrho_{\phi}:C_0((0,1])\otimes A \to B$. Conversely, any $*$-homomorphism $\varrho:C_0((0,1])\otimes A \to B$ induces an order zero map 
$\phi_{\varrho}(a):=\varrho(\id_{(0,1]}\otimes a)$.\\
These mutual  assignments yield a canonical correspondence between the space of c.p.c. order zero maps from $A$ to $B$  and the space of $*$-homomorphisms from $C_0((0,1])\otimes A $ to $B$.
\end{thm}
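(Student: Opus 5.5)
The plan is the standard one from Winter--Zacharias. For the forward direction, let $\phi\colon A\to B$ be an order zero map and pass to the bidual. I would first treat the unital case and then reduce the general case to it via the unitization $A^{+}$.

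\textbf{Step 1: the structure theorem.} The first step is to produce a positive element $h$ in the multiplier-type algebra $\mathcal{C}=C^*(\phi(A))''\subset B^{**}$ and a $*$-homomorphism $\pi_\phi\colon A\to \mathcal{C}\cap\{h\}'$ with
\[
\phi(a)=h\,\pi_\phi(a)\qquad (a\in A).
\]
Concretely, when $A$ is unital set $h=\phi(1)$. Then use orthogonality to show that $\phi(a)\phi(b)=h\,\phi(ab)$ for all $a,b\in A$.

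This identity is proved in stages:
\begin{itemize}
\item First prove it for projections, using that $\phi(p)$ and $\phi(1-p)$ are orthogonal, so $h\phi(p)=\phi(p)^2$.
\item Then extend to arbitrary elements by applying the same argument to spectral projections in $A^{**}$, after extending $\phi$ normally to $A^{**}$.
\end{itemize}
Finally, define $\pi_\phi(a)$ as the strong limit of $(h+1/n)^{-1}\phi(a)$ on the support projection of $h$. Multiplicativity then follows from the identity above.

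\textbf{Step 2: the $*$-homomorphism on the cone.} Since $h$ commutes with $\pi_\phi(A)$ and $0\le h\le 1$, functional calculus gives a $*$-homomorphism
\[
\varrho_\phi\colon C_0((0,1])\otimes A\to \mathcal{C},\qquad f\otimes a\mapsto f(h)\,\pi_\phi(a).
\]
This map clearly sends $\id_{(0,1]}\otimes a$ to $\phi(a)$. It remains to check that the range lies in $B$ and not merely in $B^{**}$. The generators $\id_{(0,1]}\otimes a$ are mapped into $B$, and they generate $C_0((0,1])\otimes A$ as a $C^*$-algebra, because polynomials in $t$ without constant term are dense in $C_0((0,1])$ and $t^k\otimes a^k$-type products realize them. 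Hence the range lies in $B$, and $\varrho_\phi$ is uniquely determined by its values on these generators.

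\textbf{Step 3: the converse.} Let $\varrho\colon C_0((0,1])\otimes A\to B$ be a $*$-homomorphism and set $\phi_\varrho(a)=\varrho(\id_{(0,1]}\otimes a)$. This map is c.p.c., since $a\mapsto \id_{(0,1]}\otimes a$ is c.p.c. It preserves orthogonality because $ab=0$ implies $(\id\otimes a)(\id\otimes b)=0$.

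\textbf{Step 4: bijectivity.} Both assignments are mutually inverse by the uniqueness observed in Step 2. The main obstacle is Step 1, namely the identity $\phi(a)\phi(b)=\phi(1)\phi(ab)$. This requires the passage to $A^{**}$ and an approximation of positive elements by spectral projections, together with the norm continuity of $\phi$, to go from projections to general elements. In the non-unital case, $\phi(1)$ is replaced by the strict limit of $\phi(e_\lambda)$ along an approximate unit.
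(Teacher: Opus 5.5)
Steps 2--4 of your proposal are fine, and your overall architecture is the Winter--Zacharias route that the paper quotes without proof: structure theorem $\phi=h\pi_\phi$, then the cone homomorphism $f\otimes a\mapsto f(h)\pi_\phi(a)$, the easy converse, and uniqueness on the generators $\id_{(0,1]}\otimes a$.

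\textbf{The gap is in Step 1.} Your derivation of $\phi(a)\phi(b)=h\,\phi(ab)$ for \emph{all} $a,b$ never uses complete positivity, and without it the identity is false. Consider the transpose map $\theta$ on $M_2$:
\begin{itemize}
\item it is unital, positive and contractive;
\item it preserves orthogonality: if $a,b\ge 0$ and $ab=0$, then $ba=(ab)^*=0$, so $\theta(a)\theta(b)=\theta(ba)=0$;
\item it satisfies $h\theta(p)=\theta(p)^2$ for every projection $p$, and it is its own normal extension.
\end{itemize}
Yet $\theta(p)\theta(q)=\theta(qp)\neq\theta(pq)$ for non-commuting projections $p,q$. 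So every ingredient you list holds for $\theta$, but the conclusion fails.

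What your orthogonality and spectral-projection argument actually gives is the identity inside each abelian algebra $C^*(a,1)$ with $a=a^*$: namely $h\phi(a)=\phi(a)h$ and $\phi(a)^2=h\,\phi(a^2)$. Polarization then yields only the Jordan version $\phi(a)\phi(b)+\phi(b)\phi(a)=h\,\phi(ab+ba)$. Passing from this to multiplicativity is exactly where $2$-positivity is needed. So the sentence ``multiplicativity then follows from the identity above'' is unsupported.

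\textbf{How to repair it.} Use the spectral argument only to get $[h,\phi(a)]=0$ and $\phi(a)^2=h\,\phi(a^2)$ for self-adjoint $a$. Then define $\pi_\phi$ as the point-strong limit of the c.p.c.\ maps $a\mapsto (h+\tfrac1n)^{-1/2}\phi(a)(h+\tfrac1n)^{-1/2}$. This $\pi_\phi$ is c.p.c., $\phi=h\pi_\phi$, and $\pi_\phi(a^2)=\pi_\phi(a)^2$ for every $a=a^*$. Hence every self-adjoint element lies in the multiplicative domain of $\pi_\phi$, and Choi's multiplicative-domain theorem makes $\pi_\phi$ a $*$-homomorphism. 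Alternatively, you can argue through a Stinespring dilation. The identity $\phi(a)\phi(b)=h\,\phi(ab)$ then becomes a consequence rather than an input.

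Two smaller points also need an argument.
\begin{itemize}
\item Multiplication is not weak$^*$-continuous, so $\phi^{**}(1_E(a))\,\phi^{**}(1_F(a))=0$ for disjoint sets does not follow just from ``extending $\phi$ normally''. Obtain it from continuous functions with disjoint supports, using that normal c.p.\ maps are strongly continuous on bounded sets.
\item In the non-unital case, it is not automatic that $a+\lambda 1\mapsto\phi(a)+\lambda h$ is order zero on $A^{+}$. It is cleaner to work directly with $h=\phi^{**}(1_{A^{**}})$.
\end{itemize}
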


Throughout the paper, we fix a free ultrafilter $\omega$ on $\mathbb{N}$ and recall that  
$l^{\infty}(\mathbb{N}, A)=\prod_{n=1}^{\infty} A$ denotes  the $C\sp*$-algebra of  all bounded function from $\mathbb{N}$  to  $A$. We define a closed ideal of $l^{\infty}(\mathbb{N}, A)$ as follows: 
\[c_{\omega}(\mathbb{N}, A)=\{(a_n)_n \in l^{\infty}(\mathbb{N}, A)  \mid \lim_{\omega}\|a_n \|=0 \}. \]

Then we denote by $A^{\omega}=\prod_{n=1}^{\infty} A /c_{\omega}(\mathbb{N}, A)$ the ultrapower $C\sp*$-algebra of $A$  with respect to the filter $\omega$ that is  equipped with the norm  $\|a\|= \lim_{\omega} \|a_n\| $ for $a=[(a_n)_n] \in A^{\omega}$. In addition, we denote by $\pi_{\omega}$ the canonical quotient map from $\prod_{n=1}^{\infty} A$  onto $A^{\omega}$. Note that we can embed $A$ into $A^{\omega}$ as constant sequences, and we call $A^{\omega} \cap A'$ the central ultrapower algebra of $A$. 

Since $ \|a\|=\lim_{\omega}\|a_n\|$ for $a=[(a_n)_n] \in A^{\omega}$, for any nonzero projection $p=[(p_n)] \in A^{\omega}$ we may assume that each $p_n$ is a nonzero projection in $A$. Similarly, for any nonzero positive element $a \in A^{\omega}$ we can represent it as $a=[(a_n)_n]$ such that each $a_n$ is a nonzero positive element in $A$ and  $\{ \|a_n\| \mid n=1,2, \dots \}$ is uniformly away from zero.

A unified conceptual framework encompassing both the weak tracial Rokhlin property for finite group actions and inclusions of $C\sp*$-algebras  has been suggested in \cite{LO1} motivated by \cite{BS, LO}. 
\begin{defn} \cite[Definition 3.1]{LO1}\label{D:sequentiallysplitbyorderzero}
 A $^*$-homomorphism $\phi:A \to B$ is called tracially sequentially-split by order zero map, if for every positive nonzero element $z \in A^{\omega}$  there exist a c.p.c map of order zero  $\psi: B \to A^{\omega}$ and a nonzero contractive positive element $g\in A^{\omega}\cap A'$ such that 
\begin{enumerate}
\item $\psi(\phi(a))=ag$ for all $a\in A$, 
\item $1_{A^{\omega}} -g \lesssim z$ in $A^{\omega}$.
\end{enumerate}
We call $\psi$ a tracial approximate left inverse.  
\end{defn}

 We use the diagram below  to symbolize that $\phi$ is tracially sequentially-split by order zero map with its tracial approximate left inverse $\psi$;
\begin{equation*}\label{D:diagram}
\xymatrix{ A \ar[rd]_{\phi} \ar@{-->}[rr]^{\iota} && A^{\omega} &\\
                          & B \ar[ur]_{\psi: \,\text{order zero.}} }
\end{equation*}

The following proposition provides a mechanism for transferring properties from the matrix-extended level $M_n(A)$ back to the underlying $C^*$-algebra $A$. 
\begin{prop}\cite[Proposition 3.3]{LO1}\label{P:amplification}
 Let $A$ be a unital inifinite dimensional simple  $C\sp*$-algebra and $B$ be a unital $C\sp*$-algebra. Suppose that $\phi:A \to B$ is tracially sequentially-split by order zero map and unit preserving. Then its amplification $\phi\otimes \id_{M_n}: A\otimes M_n \to B\otimes M_n$ is tracially sequentially-split by order zero map for any $n\in \mathbb{N}$.  
\end{prop}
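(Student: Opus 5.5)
Given a nonzero positive $z\in (A\otimes M_n)^{\omega}$, the task is to produce an order zero map $\Psi: B\otimes M_n\to (A\otimes M_n)^{\omega}$ and a nonzero contraction $G\in (A\otimes M_n)^{\omega}\cap (A\otimes M_n)'$ such that $\Psi\circ(\phi\otimes\id_{M_n})(x)=xG$ and $1-G\lesssim z$. The plan is to build both from the data for $\phi$ by tensoring with $\id_{M_n}$. Throughout we use the identification $(A\otimes M_n)^{\omega}\cong A^{\omega}\otimes M_n$, which holds because $M_n$ is finite dimensional.

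\textbf{Step 1: shrink the target to $A^{\omega}$.} We find a nonzero positive $z'\in A^{\omega}$ with $z'\otimes 1_n\lesssim z$, or at least $z'\oplus\cdots\oplus z'$ ($n$ copies) $\lesssim z$. Since $A$ is simple, unital and infinite dimensional, $A$ is non-elementary, and this passes to the ultrapower in a controlled way: we may represent $z=[(z_k)]$ with $\|z_k\|$ bounded away from $0$. A corner $e_{ij}$-entry of $z$ (after conjugating by a unitary scalar matrix, or compressing with $1\otimes e_{ii}$ applied to $(z-\varepsilon)_+$) gives a nonzero positive $w\in A^{\omega}$ with $w\lesssim z$, via Lemma \ref{lem:cuntz_properties}(1),(10).

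Inside $\overline{wA^{\omega}w}$ we then need $n$ mutually orthogonal, mutually Cuntz-equivalent nonzero positive elements $z'_1,\dots,z'_n$. For this we work levelwise: each hereditary subalgebra $\overline{w_kAw_k}$ of the simple infinite dimensional algebra $A$ is infinite dimensional, and so contains such elements; the standard argument of Phillips \cite{P} (Lemma 2.4-type results) produces them. The danger here is uniformity, since the norms may collapse along $\omega$. To avoid this, I would normalize, for instance by using elements of norm one obtained from functional calculus on a spectrum point of $w_k$ with $\|w_k\|\ge c$. Then $z'=z'_1$ gives $z'\otimes 1_n\sim z'_1\oplus\cdots\oplus z'_n\sim \sum_i z'_i\lesssim w\lesssim z$, using Lemma \ref{lem:cuntz_properties}(13),(14). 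I expect this uniform division in $A^{\omega}$ to be the main technical obstacle.

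\textbf{Step 2: apply the hypothesis and amplify.} Applying Definition \ref{D:sequentiallysplitbyorderzero} to $z'$ gives $\psi$ and $g$. We set $\Psi=\psi\otimes\id_{M_n}$ and $G=g\otimes 1_n$.
\begin{itemize}
\item The map $\Psi$ is order zero. By Theorem \ref{T:semiprojective}, $\psi$ corresponds to a $*$-homomorphism $\varrho:C_0((0,1])\otimes B\to A^{\omega}$, and $\varrho\otimes\id_{M_n}$ is a $*$-homomorphism from $C_0((0,1])\otimes B\otimes M_n$ which corresponds exactly to $\psi\otimes\id_{M_n}$.
\item The element $G$ commutes with $A\otimes M_n$ because $g$ commutes with $A$.
\item The splitting identity holds: $\Psi((\phi\otimes\id)(a\otimes e_{ij}))=ag\otimes e_{ij}=(a\otimes e_{ij})G$.
\item Finally, $1-G=(1-g)\otimes 1_n\lesssim z'\otimes 1_n\lesssim z$, by Lemma \ref{lem:cuntz_properties}(14) and Step 1.
\end{itemize}
The unit-preserving hypothesis on $\phi$ is used only to make sure that $(\phi\otimes\id)(1)=1$, so that the identity $\Psi(1)=G$ is consistent with $G$ being the image of the unit.
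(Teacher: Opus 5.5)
Your extraction of $w$ and your Step 2 are correct. A nonzero positive matrix has a nonzero diagonal entry $z_{ii}$, and $z_{ii}\otimes e_{11}=(1\otimes e_{i1})^*z(1\otimes e_{i1})\lesssim z$. The map $\psi\otimes\id_{M_n}$ is order zero via the cone picture, $g\otimes 1_n$ commutes with $A\otimes M_n$, and $(1-g)\otimes 1_n\lesssim z'\otimes 1_n$ by Lemma \ref{lem:cuntz_properties}(14).

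\textbf{The gap.} It lies in the division step in $A^{\omega}$, and normalizing norms does not address it. Suppose that for each $k$ you choose orthogonal elements $z'^{(k)}_1,\dots,z'^{(k)}_n$ of $\overline{w_kAw_k}$ that are Cuntz equivalent \emph{in $A$}. The elements witnessing these equivalences have norms and approximation rates that depend on $k$ without control. So you do not get $[(z'^{(k)}_1)]\sim[(z'^{(k)}_i)]$ in $A^{\omega}$.

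Here is a counterexample. In the CAR algebra with trace $\tau$, let $p$ be a projection with $\tau(p)=1/2$, let $p_k\le p$ be projections with $\tau(p_k)\to 0$, and put $b_k=p_k+k^{-1}(p-p_k)$. Each $b_k$ has norm one and is invertible in $pAp$, so $b_k\sim p$ in $A$. But $[(b_k)]=[(p_k)]$ in $A^{\omega}$, and $\tau^{\omega}$ shows that $p\lesssim[(b_k)]$ fails there. Hence your claim $z'\otimes 1_n\sim z'_1\oplus\cdots\oplus z'_n$ is unjustified.

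Separately, levelwise membership in $\overline{w_kAw_k}$ does not give membership in $\overline{wA^{\omega}w}$ without a spectral cut-off. Your functional-calculus remark should be made into one.

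\textbf{The fix.} Use equivalences given by exact relations among elements of bounded norm, since these survive passage to $A^{\omega}$.

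Normalize $\|w_k\|=1$. Take continuous $f,e$ on $[0,1]$ with $f=0$ on $[0,1/2]$, $f(1)=1$, $e(0)=0$, and $e=1$ on $[1/2,1]$.

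Each $\overline{f(w_k)Af(w_k)}$ is a nonzero hereditary subalgebra of a simple unital infinite-dimensional algebra. It is therefore not commutative: otherwise it would be $\mathbb{C}p$ with $p$ a minimal projection, making $A$ elementary and hence finite dimensional. Since a $C^*$-algebra without nonzero nilpotents is commutative, it contains $y_k$ with $y_k^2=0$ and $\|y_k\|=1$.

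Then $y=[(y_k)]$ satisfies $\|y\|=1$, $y^2=0$, and $y=e(w)ye(w)$, so $y\in\overline{wA^{\omega}w}$. With $c=y^*y$, Lemma \ref{lem:cuntz_properties}(1),(4),(13),(14) give $c\otimes 1_2\sim y^*y\oplus yy^*\sim y^*y+yy^*\lesssim w$ in $A^{\omega}$.

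This applies to every nonzero positive element of $A^{\omega}$. Iterating $r$ times with $2^r\ge n$ therefore yields $z'$ with $z'\otimes 1_n\lesssim z'\otimes 1_{2^r}\lesssim w$.

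Alternatively, assemble norm-one order zero maps $M_n\to\overline{f(w_k)Af(w_k)}$ obtained from Glimm's lemma into an order zero map $\nu:M_n\to\overline{wA^{\omega}w}$, and take $z'=\nu(e_{11})$. With either replacement, the rest of your argument goes through.
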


Having established the requisite background on Cuntz subequivalence and the technical properties of c.p.c. order zero maps, we are now positioned to investigate their global implications. The following section is devoted to proving that regularity properties of $B$ are inherited by $A$ via the tracial splitting map. We begin with $m$-comparison, which serves as a prototype for the stability of our framework.

\section{Main results}\label{S:3}

In this section, we prove three permanence results, which are the main results of this paper; first, tracial $m$-comparison; second, tracial $m$-almost divisibility; and third, tracial nuclear dimension,  all of which are related to nuclear dimension $C\sp*$-algebra $A$, denoted by $\dim_{\nuc}A$, of Winter and Zacharias \cite{WZ2010}. These all show that the properties possessed by $B$ is transferred to $A$ through $\phi:A \to B$ that is tracially sequentially-split by order zero map. 
   
\begin{defn}\cite[Definition 2.1-(i)]{W}
Let $A$ be a separable, simple, unital $C\sp*$-algebra and $m\in \mathbb{N}$. 
It is said that $A$ has $m$-comparison of positive elements, if for any nonzero positive contractions $a, b_0, \dots, b_m \in M_{\infty}(A)$ we have 
\[ a \lesssim b_0 \oplus \dots \oplus b_m\] 
whenever 
\[ d_{\tau} (a) < d_{\tau}(b_i) \]  
for every $\tau \in \QT(A)$ and $i=0, \dots, m$. Based upon \cite[Proposition 2.3]{W}, we also say that $A$ has tracial $m$-comparison if $A$ has $m$-comparison. 
\end{defn}
\begin{rem}
It is known  in \cite[Theorem 1]{Le} that If $A$ is a separable $C\sp*$-algebra with $\dim_{\nuc}A \le  m$, then $A$
has $m$-comparison. Thus $m$-comparison can be thought as an algebraic expression of topological dimension. 
\end{rem}
 \begin{thm}\label{T:m-comparison}
Let $A$ and $B$ be  separable stably finite simple unital $C\sp*$-algebras and $\phi: A \to B $ a $*$-homomorphism which is tracially sequentially-split by order zero. If $B$ satisfies  tracial $m$-comparison in the sense of W. Winter, then so does $A$.   
\end{thm}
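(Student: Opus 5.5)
First reduce to the case where $a, b_0, \dots, b_m$ lie in $A$ itself. The tool is Proposition \ref{P:amplification}: replacing $\phi$ by $\phi\otimes\id_{M_n}$ keeps the hypothesis. This needs $\phi$ unital, or at least that we may cut down by $\phi(1)$. We also use that $M_n(B)$ inherits tracial $m$-comparison from $B$, since $M_\infty(M_n(B))=M_\infty(B)$. So fix nonzero positive contractions $a,b_0,\dots,b_m\in A$ with $d_\tau(a)<d_\tau(b_i)$ for all $\tau\in\T(A)$ and all $i$.

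The plan is to push the data to $B$ with $\phi$, compare there, and pull back with the order zero map $\psi$. The step I expect to be the main obstacle is the first one: transferring the strict trace inequality to $B$. A trace $\sigma\in\T(B)$ gives $\sigma\circ\phi$, which is a tracial state on $A$ when $\phi$ is unital. This gives $d_\sigma(\phi(a))=d_{\sigma\circ\phi}(a)<d_{\sigma\circ\phi}(b_i)=d_\sigma(\phi(b_i))$. Two points need care.
\begin{itemize}
\item If $\phi$ is not unital, $\sigma\circ\phi$ is a positive functional of norm $\sigma(\phi(1))$. The inequality scales, but it may collapse to $0<0$. I would handle this by using that $\phi$ is injective ($\psi\circ\phi=\cdot g$ with $g\neq0$ and $A$ simple) and, if necessary, by working in the corner $\phi(1)B\phi(1)$.
\item Strictness must survive uniformly. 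By compactness of $\T(A)$ and lower semicontinuity of $d_\tau$, I would first replace $a$ by $(a-\varepsilon)_+$. It then suffices to show $(a-\varepsilon)_+\lesssim b_0\oplus\dots\oplus b_m$ for every $\varepsilon$, by Lemma \ref{lem:cuntz_properties}(11).
\end{itemize}
Tracial $m$-comparison in $B$ then gives $\phi((a-\varepsilon)_+)\lesssim \phi(b_0)\oplus\dots\oplus\phi(b_m)$ in $M_{m+1}(B)$.

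Next, apply $\psi\otimes\id_{M_{m+1}}$. This map is order zero, and order zero maps preserve Cuntz subequivalence. To see this, use Theorem \ref{T:semiprojective}: $\psi(x)$ corresponds to $\varrho_\psi(\id\otimes x)$, $\varrho_\psi$ is a $*$-homomorphism, and $\id\otimes x\lesssim \id\otimes y$ whenever $x\lesssim y$. Combined with $\psi(\phi(x))=xg$, this gives in $A^\omega$
\[ (a-\varepsilon)_+ g \lesssim b_0g\oplus\dots\oplus b_mg \le b_0\oplus\dots\oplus b_m . \]
Here $b_ig\lesssim b_i$ because $g$ is central and contractive, so $b_i g = b_i^{1/2}gb_i^{1/2}$.

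Finally, recover $a$ from $ag$ plus a small remainder. Since $g$ commutes with $a$, we have $a=ag+a(1-g)\lesssim ag\oplus (1-g)$, using Lemma \ref{lem:cuntz_properties}(12) and $a^{1/2}(1-g)a^{1/2}\lesssim 1-g$. The choice of $z$ is made with this in mind, so that $1-g\lesssim z$ fits in the room left over. Let $\eta=\min_i\inf_\tau (d_\tau(b_i)-d_\tau((a-\varepsilon)_+))>0$. Choose a nonzero $c\in A_+$ with $d_\tau(c)<\eta/2$, which exists since $A$ is simple and infinite-dimensional, or trivially otherwise. Run the comparison above with $(a-\varepsilon)_+\oplus c$ in place of $(a-\varepsilon)_+$, and take $z=c$ in Definition \ref{D:sequentiallysplitbyorderzero}. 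This yields
\[ (a-\varepsilon)_+\lesssim (a-\varepsilon)_+g\oplus c \]
inside the same image, hence $(a-\varepsilon)_+\lesssim b_0\oplus\dots\oplus b_m$ in $M_{m+1}(A^\omega)$. Subequivalence between elements of $A$ that holds in $A^\omega$ descends to $A$: write $(x-\delta)_+=v^*yv$ with $v=[(v_n)]$ and take $n$ large, using Lemma \ref{lem:cuntz_properties}(10),(11). This completes the argument.
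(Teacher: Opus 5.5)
There is a genuine gap, in the final absorption step. The rest is fine: the reduction to $A$, the uniform trace gap, the fact that $\psi\otimes\id_{M_{m+1}}$ preserves $\lesssim$, and the descent from $A^\omega$ to $A$. Running the comparison in $B$ with $(a-\varepsilon)_+\oplus c$ and applying $\psi\otimes\id_{M_{m+1}}$ only gives
\[
(a-\varepsilon)_+g\oplus cg\lesssim b_0\oplus\dots\oplus b_m ,
\]
because the extra room passes through $\psi$ and comes back multiplied by $g$. To chain this with $(a-\varepsilon)_+\lesssim (a-\varepsilon)_+g\oplus(1-g)$ you need $1-g\lesssim cg$. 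The definition only gives $1-g\lesssim z=c$, and $c\lesssim cg$ fails in general because $g$ is only tracially close to $1$.

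For instance, suppose $A$ has a unique trace $\tau$ and $g$ is a projection with $\tau_\omega(g)<1$. Then $x\mapsto\tau_\omega(xg)$ is a tracial functional on $A$, hence equals $\tau_\omega(g)\tau$. This gives $d_{\tau_\omega}(cg)=d_\tau(c)\tau_\omega(g)<d_\tau(c)$, so $c\not\lesssim cg$. You also cannot choose $z$ with $z\lesssim cg$, since $z$ must be fixed before $g$ is produced. Turning a trace inequality into a Cuntz subequivalence in $A$ or $A^\omega$ is exactly the comparison you are trying to prove. So ``inside the same image'' hides a circularity.

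The missing idea is to put the room on the $b$-side, as an element of $A$ that never passes through $\psi$. Suppose some $b_i$, say $b_0$, has $0$ as an accumulation point of its spectrum; this holds whenever $b_0$ is not Cuntz equivalent to a projection. Put $b=\bigoplus_i b_i$ and choose $\delta>0$ with $\phi((a-\varepsilon)_+)\lesssim(\phi(b)-\delta)_+$ in $M_{m+1}(B)$. Take $z=f(b_0)\neq 0$ with $f$ supported in $(0,\delta)$. Then $z\perp(b_0-\delta)_+$ inside $\her(b_0)$, and
\[
(a-\varepsilon)_+\lesssim (a-\varepsilon)_+g\oplus(1-g)\lesssim (b-\delta)_+g\oplus z\lesssim (b-\delta)_+\oplus z\lesssim b .
\]
Here the room $z$ is never multiplied by $g$.

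If every $b_i$ is Cuntz equivalent to a projection, there is no such room inside $b$. You must first replace the $b_i$ by purely positive $c_i\lesssim b_i$ with $d_\tau(c_i)>d_\tau(b_i)-\rho$, which uses simplicity and infinite-dimensionality of $A$. The uniform trace gap you computed is exactly what pays for this replacement. This is the paper's route: Lemma 2.21 of [LO] produces the purely positive $c_i$, and Proposition 2.18 of [LO] performs the pull-back along $\psi$ for a purely positive target.
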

\begin{proof}
In view of Proposition \ref{P:amplification}, we may assume that $a, b_0, \dots b_m \in A$. Suppose that for each $i=0, 1, \dots, m$ \[d_{\tau} (a)< d_{\tau}(b_i)   \quad \forall \tau \in T(A).\]  We want to conclude that 
\[ a \lesssim b_0 \oplus b_1 \oplus \cdots \oplus b_m. \] 
Since $\displaystyle d_{\tau}(\phi(a)) < d_{\tau}(\phi(b_i))$ for $i=0, \dots, m$ and $B$ satisfies $m$-comparison, it follows that 
\[ \phi_{m+1} (a) \lesssim \phi_{m+1} (b_0\oplus \cdots \oplus b_m).\]
If at least one of $b_i$'s is purely positive in $A$, then $ \oplus_{i=0}^m b_i$ is purely positive in $M_{m+1}(A)$.  Thus, 
by \cite[Proposition 2.18]{LO} we have $a \lesssim b_0 \oplus \cdots \oplus b_{m}$. 
If none of $b_i$'s  is purely positive, then we may assume that each $b_i$ is Cuntz equivalent to a projection $q_i$ and split two cases: \\
Case 1: If $a$ is not purely positive, i.e., $a$ is Cuntz equivalent to a projection $p$ in $A$, note that for each $i=0, \dots, m$
the function $\tau \ni T(B) \to d_{\tau} (\phi(b_i)) -d_{\tau}(\phi(a))$ is continuous and positive. Since $T(B)$ is w$^*$- compact, it follows that  
\begin{equation}
\rho_i=  \inf_{\tau \in T(A)} (d_{\tau}(p) - d_{\tau}(q_i))> 0.
\end{equation}
Then by Lemma \cite[Lemma 2.21]{LO} we can find $c_i \in W(A)^{+}$ such that $d_{\tau} (p) < d_{\tau}(b_i)- \rho<  d_{\tau}(c_i)$ and $c_i \lesssim b_i$. \\
Now we are in a situation that for each $i=0, 1, \dots, m$ \[d_{\tau} (a)< d_{\tau}(c_i)   \quad \forall \tau \in T(A),\] where $c_i$'s are purely positive and $c_i \lesssim b_i$ for all $i$.  Again by \cite[Proposition 2.18]{LO} 
we have 
\[ a \lesssim c_0 \oplus \cdots \oplus c_m \lesssim b_0 \oplus \cdots \oplus b_m.\]

Case 2: If $a$ is purely positive, we want to show that for any $\epsilon >0$ 
\[(a-\epsilon)_{+} \lesssim b_0 \oplus \cdots \oplus b_m.\]
Given $\epsilon >0$ consider a function $f:[0, \infty) \to [0,1]$ such that $f(\lambda)>0$ for $\lambda \in (0, \epsilon)$ and $f(\lambda)=0$ for $\lambda \in \{ 0\} \cup [\epsilon, \infty)$. Then $f(a) \neq 0$ and $f(a) \perp (a-\epsilon)_{+}$.
Therefore, $\rho=\int \{d_{\tau} (f(a))\mid \tau \in T(A) \}$ is strictly positive. 
\[
\begin{split}
d_{\tau}((a-\epsilon)_{+})+ \rho &\le d_{\tau}((a-\epsilon)_{+})+ d_{\tau}(f(a)) \\
&=d_{\tau}((a-\epsilon)_{+}+ f(a)) \le d_{\tau}(a) < d_{\tau}(b_i).
\end{split}
\]
Again, we use \cite[Lemma 2.21]{LO} to find $c_i \in W(A)^{+}$ such that $c_i \lesssim b_i$ and $d_{\tau} (c_i)> d_{\tau}(b_i) -\rho > d_{\tau}((a-\epsilon)_{+})$. Consequently, 
\[d_{\tau}(\phi(a-\epsilon)_{+}) < d_{\tau}(\phi (c_i)) \quad \text{for all $\tau \in T(B)$}\] with $c_i$ being purely positive.  By the same argument in the above 
\[ (a-\epsilon)_{+} \lesssim  c_0 \oplus \cdots \oplus c_m \lesssim b_0 \oplus \cdots \oplus b_m\] 

\end{proof} 

\begin{defn}\cite[Definition 2.5-(ii)]{W}
Let $A$ be a unital exact $C\sp*$-algebra and $m\in \mathbb{N}$. It is said that $A$ is \emph{tracially $m$-almost divisible}, if for any positive contraction $a \in M_{\infty}(A)$, $\epsilon >0$ and $0\neq k \in \mathbb{N}$, there is a c.p.c order zero map
\[ \psi: M_k \to \her(a) \subset M_{\infty}(A)\]
such that 
\[\tau(\psi(\mathbf{1_k})) \ge \frac{1}{m+1} \cdot \tau(a) -\epsilon \]
for all $\tau \in T(A)$. 
\end{defn}
\begin{rem}
It is also known in \cite[Proposition 3.7]{W} that if $A$ is a separable, simple, non elementary and unital $C^*$-algebra with $\dim_{\nuc}A \le
m$, then $A$ has tracial $\widetilde{m}$-almost divisibility for some $\widetilde{m} \in \mathbb{N}$. Thus tracial $m$-almost divisibility can be thought as another algebraic expression of topological dimension.
\end{rem}
\begin{thm}\label{T:divisibility}
Let $A$ and $B$ be  simple exact unital $C\sp*$-algebras and $\phi: A \to B $ a $^*$-homomorphism which is tracially sequentially-split by order zero. Then if $B$ is \emph{tracially $m$-almost divisible}, so is $A$. 
\end{thm}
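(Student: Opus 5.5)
By Proposition \ref{P:amplification} we may assume the positive contraction $a$ lies in $A$ itself. The amplified map $\phi\otimes\id_{M_n}$ is again tracially sequentially-split by order zero, and $T(M_n(A))$ is identified with $T(A)$ up to normalization. Fix $a\in A_+$ with $\|a\|\le 1$, together with $\epsilon>0$ and $k\in\mathbb{N}$. The plan is to divide $\phi(a)$ in $B$, pull the division back to $A^{\omega}$ through a tracial approximate left inverse $\psi$, and then read off an order zero map into $\her(a)$ at some finite stage of the ultrapower.

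\textbf{Step 1 (divide in $B$).} Since $B$ is tracially $m$-almost divisible, there is an order zero map $\theta\colon M_k\to\her(\phi(a))\subset B$ with
\[
\sigma(\theta(\mathbf{1_k}))\ge \tfrac{1}{m+1}\sigma(\phi(a))-\epsilon \quad\text{for all } \sigma\in T(B).
\]
Replacing $\theta$ by a slightly cut-down version, we may arrange $\theta(x)\in\overline{\phi(a)B\phi(a)}$ in the concrete form $\theta(x)\approx \phi(a)^{1/2}\,y(x)\,\phi(a)^{1/2}$.

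\textbf{Step 2 (pull back).} Choose a nonzero positive $z\in A$ that is small in trace, say $d_\tau(z)<\epsilon$ for all $\tau\in T(A)$. Such a $z$ exists because $A$ is simple, infinite dimensional and has compact trace space (the standard construction of small orthogonal positive elements). Let $\psi\colon B\to A^{\omega}$ and $g\in A^{\omega}\cap A'$ be as in Definition \ref{D:sequentiallysplitbyorderzero}, with $1-g\lesssim z$. Then $\psi\circ\theta\colon M_k\to A^{\omega}$ is order zero, being a composition of order zero maps (via Theorem \ref{T:semiprojective}). Moreover, since $\psi$ is order zero and $\phi(a)$ generates the hereditary subalgebra, the relation $\psi(\phi(a)b\phi(a))=\psi(\phi(a))\,\varrho_\psi(\cdot)\cdots$ should place $\psi\theta(M_k)$ inside $\her(\psi(\phi(a)))=\her(ag)\subset\her(a)^{\omega}$. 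This uses the structure $\psi(bc)\psi(1)=\psi(b)\psi(c)$ of order zero maps.

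\textbf{Step 3 (trace estimate).} For a limit trace $\tau^{\omega}$ on $A^{\omega}$ induced by $\tau\in T(A)$, the composition $\tau^\omega\circ\psi$ is a positive tracial functional on $B$, again by the order zero structure: $b\mapsto\tau^{\omega}(\psi(b))$ is a trace of norm $\tau^\omega(\psi(1))\le1$. Applying Step 1 to its normalization $\sigma$ gives
\[
\tau^{\omega}(\psi\theta(\mathbf{1_k}))\ge\tfrac1{m+1}\tau^{\omega}(\psi\phi(a))-\epsilon=\tfrac{1}{m+1}\tau^{\omega}(ag)-\epsilon .
\]
Finally $\tau^\omega(a(1-g))\le d_{\tau^\omega}(1-g)\le d_\tau(z)<\epsilon$, so $\tau^\omega(ag)\ge\tau(a)-\epsilon$.

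\textbf{Step 4 (descend to $A$).} Order zero maps from $M_k$ are weakly stable: $C_0((0,1])\otimes M_k$ is projective. We can therefore lift $\psi\theta$ to a sequence of order zero maps $\theta_n\colon M_k\to A$. Conjugating by $(a^{1/n'})$-type approximate units, or using a cut-down $f_\delta(a)$, moves them into $\her(a)$ at the cost of a small error. Choosing $n$ in an $\omega$-large set then yields the inequality uniformly in $\tau$, with $3\epsilon$ in place of $\epsilon$.

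\textbf{Main obstacle.} The delicate point is the uniformity over $T(A)$ in Steps 3 and 4. Limit traces $\tau^\omega$ do not exhaust $T(A^{\omega})$, and passing from $\omega$-limits to a single index $n$ must be done uniformly over the compact set $T(A)$. I would handle this by contradiction. Suppose that for $\omega$-many $n$ some $\tau_n\in T(A)$ violates the inequality. Then the trace $\lim_\omega\tau_n$ on $A^{\omega}$ violates the inequality proved in Step 3, since that inequality is valid for all such limit traces: the Step 3 argument uses only $\tau^\omega\circ\psi\in$ traces and $d_{\tau^\omega}(1-g)\le\lim_\omega d_{\tau_n}(z)$. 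Ensuring the containment in $\her(a)$ is exact, rather than approximate, is the second technical nuisance. I expect to resolve it via Lemma \ref{lem:cuntz_properties}(9) and a functional-calculus cut-down of $a$.
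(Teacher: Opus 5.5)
Your proposal is correct and follows essentially the same route as the paper. Both arguments divide $\phi(a)$ in $B$, pull the division back through the order zero map $\psi$, use that $\tau^{\omega}\circ\psi$ is a trace on $B$ of norm at most one together with the bound on $\tau^{\omega}(a(1-g))$ coming from $1-g\lesssim z$, and then lift $\psi\circ\theta$ to order zero maps $M_k\to A$ by projectivity of $C_0((0,1])\otimes M_k$ before fixing an index $n$. Your additions go beyond the paper at two points. First, your limit-trace contradiction argument supplies the uniformity over $T(A)$ that the paper skips, since it picks $n$ for a fixed $\tau$. Second, for exact containment in $\her(a)$, do not conjugate the lifts, since that destroys the order zero property; instead observe that $\psi\theta(M_k)\subseteq\overline{aA^{\omega}a}\subseteq\her(a)^{\omega}\subseteq A^{\omega}$ and lift inside the ultrapower of $\her(a)$.
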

\begin{proof}
In view of Proposition \ref{P:amplification}, for $a \in A\setminus \{0\}$, $\epsilon >0$ and $0\neq k \in \mathbb{N}$, it is enough to show that there is a c.p.c. order zero map $\nu:M_k \to her(a) \subset A$ such that $\tau(\nu(\mathbf{1_k})) \ge \frac{1}{m+1} \tau(a) -\epsilon$ for all $\tau \in T(A)$. We consider a triple $\phi(a) \in B$, $\epsilon >0$ and $0\neq k \in \mathbb{N}$. Since $B$ is tracially $m$-almost divisible, there is a c.p.c order zero map $\mu: M_k \to her(\phi(a)) \subset B$ such that 
\begin{equation}\label{E:star}
\tau(\mu(\mathbf{1_k})) \ge \frac{1}{m+1} \tau(\phi(a)) -\frac{\epsilon}{4} 
\end{equation}
for all $\tau \in T(B)$. For a singleton $\{a \}$, by the assumption there is a c.p.c order zero map $\psi: B \to A^{\omega}$ such that $\psi (\phi (a))= ag$ where $\psi(\mathbf{1_B})=g \in A^{\omega}\cap A'$ and $\widetilde{\tau}(a(1-g)) < \epsilon/4$ for all $\widetilde{\tau} \in T(A^{\omega})$.  Consider $\psi\circ \mu : M_{k} \to A^{\omega}$ which is c.p.c. order zero. If we denote by $\pi: \prod_{i=1}^{\infty} A \to A^{\omega}$ the natural quotient map, we can write $\psi \circ \mu = \pi \circ \prod_{i=1}^{\infty} \kappa_i $ where $\kappa_i:M_k \to A$ is a c.p.c order zero map for each $i$ by \cite[Lemma 2.1]{TWW}. Note that $\psi \circ \mu: M_k \to \her(a) \subset A^{\omega}$, and thus $\kappa_i: M_k \to \her(a) \subset A$ for each $i$.  
\\
For $\tau \in T(A)$, consider $\tau^{\omega}: A^{\omega} \to \mathbb{C}$ which is given by $\tau^{\omega} ([a_n])=\lim_{n \to \omega} \tau(a_n)$. Then 
\[ \tau^{\omega}((\psi \circ \mu)(\mathbf{1_k}))= \lim_{n \to \omega} \tau (\kappa_n(\mathbf{1_k})) \]
But, by (\ref{E:star})
\[ 
\begin{split}
\tau^{\omega} \circ \psi (\mu(\mathbf{1_k}) ) &\ge \frac{1}{m+1} (\tau^{\omega} \circ \psi ) (\phi(a)) - \frac{\epsilon}{4} \\
\lim_{n \to \omega} \tau (\kappa_n (\mathbf{1_k})) &\ge \frac{1}{m+1} \tau^{\omega}(ag)- \frac{\epsilon}{4} \\
& \ge \frac{1}{m+1} (\tau^{\omega}(a) - \tau^{\omega}(a(1-g))) -\frac{\epsilon}{4} \\
& \ge \frac{1}{m+1} \tau(a) - \frac{\epsilon}{4} -\frac{\epsilon}{4}\\
& \ge \frac{1}{m+1} \tau (a) -\frac{\epsilon}{2}  
\end{split}
\]
On the other hand, there is an infinite set $\mc{U} \in \omega$  such that $ \displaystyle \tau(\kappa_n(\mathbf{1_k}) ) > \lim_{n \to \omega} \tau(\kappa_n(\mathbf{1_k})) - \frac{\epsilon}{2}$ for all $n \in. \mc{U}$.  Pick $n$ in $\mc{U}$, then $\kappa_n: M_k \to A$ satisfies the following;
\[ \tau(\kappa_n(\mathbf{1_k})) > \frac{1}{m+1} -\epsilon.\]  So we take $\nu=\kappa_n$ and finish the proof. 
\end{proof}
While the previous results focused on the algebraic order of the Cuntz semigroup, we now turn our attention to the topological complexity of the algebras. The permanence of tracial nuclear dimension requires a more sophisticated lifting argument. Specifically, we must ensure that the finite-dimensional approximations of $B$ can be translated to $A$ without losing their orthogonality. This is achieved through the orthogonal lifting techniques developed in Lemma \ref{L:Kirchberg} and Lemma \ref{L:orthogonallift}. 

\begin{defn}\cite[Definition 7.6]{FL}
Let $A$ be a unital simple $C\sp*$-algebra and let $n \in \mathbb{N}\cup \{0\}$. It is said that $id_A$ has tracial nuclear dimension no more than $n$, if for any finite subset $\mc{F} \subset A $, any $\epsilon >0$, and any $a \in A_{+}\setminus \{0\}$, there exist a finite dimensional $C\sp*$-algebra $F$, a c.p.c. map $\alpha: A \to F$, a nonzero piecewise contractive $n$-decomposable c.p. map $\beta: F \to A$, and a c.p.c map $\gamma: A \to A\cap \beta(F)^{\perp}$, such that 
\begin{enumerate}
\item $ \|x - (\gamma(x)+ \beta \circ \alpha (x)) \| < \epsilon$ for all $x\in \mc{F}$, and 
\item $\gamma(\mathbf{1_A}) \lesssim a$ in $A$. 
\end{enumerate}
In this case, we write $\Trdim_{\nuc} \id_A = \Trdim_{\nuc} A \le n$. 
\end{defn}
An important tool for working with ultrapowers are re-indexing arguments, which allow one to find elements of the ultrapower exactly satisfying some given condition if one can find elements of the ultrapower which approximately satisfy the condttion for any given tolerance.  
\begin{lem}(Kirchberg $\epsilon$-test)\cite[Lemma 1.8]{CE} \cite[Lemma A.1]{K}\label{L:Kirchberg}
Let $X_1, X_2, \dots $ be a sequence of non-empty sets, and  for each $k, n \in \mathbb{N}$ let $f^{(k)}_n: X_n \to [0,\infty)$ be a function. Define $f^{(k)}_{\omega}: \prod_{n=1}^{\infty} X_n \to [0, \infty]$ by $f^{(k)}_{\omega} ((s_n)_{n=1}^{\infty}):= \lim_{n \to \omega} f^{(k)}_n (s_n)$  for $(s_n) \in \prod_{n=1}^{\infty}X_n$. Suppose that for all $m \in \mathbb{N}$ and $\epsilon>0$, there exist $(s_n)_{n=1}^{\infty} \in \prod_{n=1}^{\infty}X_n$ with $f^{(k)}_{\omega}((s_n)) < \epsilon$ for $k=1,\dots, m$. Then there exists $(t_n)_{n=1}^{\infty} \in \prod_{n=1}^{\infty}X_n$ such that $f^{(k)}_{\omega} ((t_n))=0$ for all $k \in \mathbb{N}$. 
\end{lem}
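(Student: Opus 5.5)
We prove the Kirchberg $\epsilon$-test by a diagonal re-indexing argument over a decreasing chain of sets in $\omega$.

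\emph{Step 1: approximate witnesses.} For each $m\in\mathbb{N}$, apply the hypothesis with $\epsilon=1/m$. This gives a sequence $(s^{(m)}_n)_{n}\in\prod_n X_n$ with $f^{(k)}_{\omega}((s^{(m)}_n))<1/m$ for $k=1,\dots,m$. By the definition of the limit along $\omega$, the set
\[ I_m:=\{n\in\mathbb{N} : f^{(k)}_n(s^{(m)}_n)<1/m \text{ for } k=1,\dots,m\} \]
belongs to $\omega$, being a finite intersection of sets in $\omega$.

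\emph{Step 2: a decreasing chain with empty intersection.} Put $J_1:=\mathbb{N}$ and, for $m\ge 2$,
\[ J_m:=I_1\cap\cdots\cap I_m\cap[m,\infty). \]
Since $\omega$ is free, each $J_m$ lies in $\omega$. The chain is decreasing and $\bigcap_m J_m=\emptyset$. Hence for each $n$ the integer
\[ m(n):=\max\{m : n\in J_m\} \]
is well defined and finite, because $n\in J_1$ and $n\notin J_m$ for $m>n$. Define
\[ t_n:=s^{(m(n))}_n\in X_n. \]

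\emph{Step 3: the diagonal sequence works.} Fix $k$ and $\delta>0$, and choose $M\ge k$ with $1/M<\delta$. Take any $n\in J_M$, which is a set in $\omega$. Then $m(n)\ge M\ge k$, and $n\in J_{m(n)}\subset I_{m(n)}$ (this also holds when $m(n)=1$). Therefore
\[ f^{(k)}_n(t_n)<1/m(n)\le 1/M<\delta. \]
Thus $\{n : f^{(k)}_n(t_n)<\delta\}\in\omega$ for every $\delta>0$, which means $f^{(k)}_{\omega}((t_n))=0$. Since $k$ was arbitrary, $(t_n)$ is the required sequence.

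\emph{Main obstacle.} The only delicate point is ensuring that $m(n)$ is finite yet tends to infinity along $\omega$. This is exactly why $J_m$ is intersected with $[m,\infty)$, which uses that $\omega$ is free. Note also that the maps $f^{(k)}_n$ only need values in $[0,\infty)$; no continuity is used anywhere.
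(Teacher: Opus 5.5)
The paper gives no proof of this lemma; it only cites the references, and your diagonal re-indexing over the decreasing chain $J_m\in\omega$ with $J_m\subset[m,\infty)$ is the standard proof of the $\epsilon$-test and is correct in substance.

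There is one small slip. In Step 3 the parenthetical claim that $n\in J_{m(n)}\subset I_{m(n)}$ ``also holds when $m(n)=1$'' is false, because $J_1=\mathbb{N}$ need not be contained in $I_1$. For example, $m(1)=1$ always, and $1$ may fail to lie in $I_1$.

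This only bites if you pick $M=1$, which happens when $k=1$ and $\delta>1$. The fix is to choose $M\ge\max(k,2)$. That is harmless because only small $\delta$ matter for showing $f^{(k)}_\omega((t_n))=0$, and it forces $m(n)\ge 2$, hence $J_{m(n)}\subset I_{m(n)}$, for every $n\in J_M$.
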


\begin{lem}(orthogonal lift of a c.p.c map)\label{L:orthogonallift}
Let $A$ be a separable nuclear unital $C\sp*$-algebra, $F$ a finite dimensional $C\sp*$-algebra and $\beta^{\omega}: F \to A^{\omega}$ a nonzero piecewise contractive  $m$-decomposable c.p. map. Put $D^{\omega}:=A^{\omega}\cap \beta^{\omega}(F)^{\perp}$. If $\Gamma^{\omega}: A \to D^{\omega}$ is c.p.c. then there exists a sequence of c.p.c. maps $\Gamma_n: A \to A$ such that 
\begin{enumerate}
\item $ [\Gamma_n(x)]=\Gamma^{\omega}(x)$ for all $x\in A$;
\item $ \Gamma_n(A) \subset A\cap \beta_n(F)^{\perp}$ for all $n$  
\end{enumerate}   
where $\beta^{\omega}=[\prod_{n=1}^{\infty} \beta_n]$ and $\beta_n: F \to A$ is a c.p. map  for each $n$. 
\end{lem}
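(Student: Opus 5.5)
The plan is to first take an arbitrary c.p.c. lift of $\Gamma^{\omega}$, then correct both the lift and the chosen representatives $\beta_n$ of $\beta^{\omega}$ so that orthogonality holds exactly at every index $n$, and finally check that these corrections vanish in $A^{\omega}$. The lemma only asks that $\beta^{\omega}=[\prod_n\beta_n]$ for \emph{some} representatives, so I will use the freedom to choose them.

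\emph{Step 1: a c.p.c. lift.} Since $A$ is separable and nuclear, the Choi--Effros lifting theorem applied to $\Gamma^{\omega}:A\to A^{\omega}=\prod_n A/c_{\omega}(\mathbb{N},A)$ gives a c.p.c. map $\Gamma'=(\Gamma'_n)_n:A\to\prod_n A$ with $\pi_{\omega}\circ\Gamma'=\Gamma^{\omega}$. Write $\beta^{\omega}=\sum_{i=0}^m\beta^{\omega,(i)}$ with each $\beta^{\omega,(i)}$ c.p.c. order zero on $F$. By Theorem \ref{T:semiprojective}, each $\beta^{\omega,(i)}$ corresponds to a $*$-homomorphism $C_0((0,1])\otimes F\to A^{\omega}$. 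Since this cone is projective, I lift it to a sequence of $*$-homomorphisms, and hence obtain order zero lifts $\beta^{(i)}_n$. Put $e^{(i)}_n=\beta^{(i)}_n(1_F)$ and $e_n=\sum_i e^{(i)}_n$. The hypothesis $\Gamma^{\omega}(A)\subset D^{\omega}$ says exactly that $\lim_{\omega}\|\Gamma'_n(x)e_n\|=0$ for every $x\in A$.

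\emph{Step 2: cut down by functional calculus.} Fix $\varepsilon>0$ and let $g_\varepsilon:[0,\infty)\to[0,1]$ be continuous with $g_\varepsilon=1$ on $[0,\varepsilon/2]$ and $g_\varepsilon=0$ on $[\varepsilon,\infty)$. Define
\[\Gamma_n^{\varepsilon}(x)=g_\varepsilon(e_n)\Gamma'_n(x)g_\varepsilon(e_n),\]
which is c.p.c. I replace each piece by the order zero map $\beta^{(i),\varepsilon}_n(f)=\varrho^{(i)}_n\big((\id_{(0,1]}-\eta)_+\otimes f\big)$, still via Theorem \ref{T:semiprojective}, with $\eta$ tied to $\varepsilon$. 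These new pieces are order zero and satisfy $\|\beta^{(i),\varepsilon}_n-\beta^{(i)}_n\|\le\eta$, so they remain a piecewise contractive $m$-decomposable representative up to $\eta$. The aim is that the range of $\beta^{(i),\varepsilon}_n$ lies in the hereditary subalgebra on which $g_\varepsilon(e_n)$ vanishes, which gives $\Gamma^{\varepsilon}_n(A)\perp\beta^{\varepsilon}_n(F)$ exactly. On the other hand, because $\Gamma'_n(x)e_n\to0$ along $\omega$, polynomial approximation of $1-g_\varepsilon$ vanishing at $0$ gives $\lim_\omega\|\Gamma^\varepsilon_n(x)-\Gamma'_n(x)\|=0$.

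\emph{Step 3: diagonalize with Lemma \ref{L:Kirchberg}.} Step 2 gives, for each $\varepsilon$, exact orthogonality but only an $\eta(\varepsilon)$-approximation of $\beta^{\omega}$. To pass to $\varepsilon\to0$, let $X_n$ be the set of pairs (a c.p.c. map $A\to A$, an $m$-decomposable c.p. map $F\to A$) that are \emph{exactly} orthogonal. Take test functions $\|\Gamma_n(x_k)-\Gamma'_n(x_k)\|$ for a dense sequence $(x_k)$ in $A$, and $\|\beta_n(f_j)-\beta^{(i)}\text{-sum}_n(f_j)\|$ for a basis $(f_j)$ of $F$. Step 2 provides $\epsilon$-solutions for every finite set of these, so Lemma \ref{L:Kirchberg} gives sequences $(\Gamma_n,\beta_n)$ with zero $\omega$-errors. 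This yields (1) and $\beta^{\omega}=[\prod_n\beta_n]$, and (2) holds by membership in $X_n$.

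\emph{Main obstacle.} The delicate point is the exact orthogonality in Step 2. For a single order zero piece, $g_\varepsilon(e^{(i)}_n)$ kills $\her\big((e^{(i)}_n-\varepsilon)_+\big)$. For $m+1$ noncommuting pieces, however, $g_\varepsilon(e_n)$ need not annihilate $(e^{(i)}_n-\varepsilon)_+$, because $e_n\ge e^{(i)}_n$ only gives a Cuntz-type comparison and not a spectral one. My first attempt will be to cut the pieces down by the \emph{total} support instead: take $\beta^{(i),\varepsilon}_n(f)=h_\varepsilon(e_n)\beta^{(i)}_n(f)h_\varepsilon(e_n)$ with $h_\varepsilon g_\varepsilon=0$. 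Each such piece is c.p. and contractive, and it is orthogonal to $\Gamma^\varepsilon_n$ by construction. Its order zero property fails only up to an error which vanishes along $\omega$, since $\Gamma'$ is irrelevant here and $h_\varepsilon(e_n)$ asymptotically commutes with $\beta^{(i)}_n(F)$ when $\varepsilon\to0$. I would then restore exact order zero-ness by applying the semiprojectivity of the cone in Theorem \ref{T:semiprojective} inside the hereditary subalgebra $\her(h_\varepsilon(e_n))$, before running the $\epsilon$-test of Step 3.
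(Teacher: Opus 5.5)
Your argument is correct in outline. It takes a genuinely different route from the paper's, and that route is in fact needed. The paper keeps the given representatives $\beta_n$ and modifies only $\Gamma_n$. It first compresses by $1-g_n$, with $g_n=f_\sigma(\beta_n(1_F))^{1/2}$, and runs an $\epsilon$-test; this only returns the asymptotic orthogonality already contained in the hypothesis. It then compresses $\Gamma_n$ into $H_n=\her^{\perp}(\beta_n(1_F))$ with a quasi-central approximate unit, relying on $\lim_{n\to\omega}\dist(\Gamma_n(a_i),H_n)=0$. That claim does not follow from $\lim_{n\to\omega}\|\Gamma_n(x)\beta_n(1_F)\|=0$, and for fixed representatives the lemma is false. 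Replace $\beta_n$ by $\beta_n+\frac1n\chi(\cdot)1_A$ for a state $\chi$ on $F$. This leaves $[\prod_n\beta_n]$ unchanged but makes $\beta_n(1_F)$ invertible, so $A\cap\beta_n(F)^{\perp}=0$ and only $\Gamma^{\omega}=0$ can be lifted. Your reading of the statement (the representatives may be chosen) and your modification of both $\Gamma_n$ and $\beta_n$ are therefore the right fix. Your version also delivers what Theorem \ref{T:Trdim} actually uses: piecewise order zero $\beta_n$ that are exactly orthogonal to $\Gamma_n$.

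A few points to tighten.
\begin{enumerate}
\item Discard the first version of Step 2 (the $(\id_{(0,1]}-\eta)_+$ cut-down); as you note, it does not give exact orthogonality for several pieces.
\item In the fix, take $h_\varepsilon$ vanishing on $[0,\varepsilon]$ and equal to $1$ on $[2\varepsilon,\infty)$. From $\beta^{(i)}_n(f)\le\|f\|e_n$ for $f\ge0$ you get $\|(1-h_\varepsilon(e_n))\beta^{(i)}_n(f)\|\le\sqrt{2\varepsilon}\,\|f\|$. So the cut-down pieces are $O(\sqrt{\varepsilon})$-close to $\beta^{(i)}_n$ uniformly in $n$. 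The order zero defect is therefore small in $\varepsilon$ uniformly in $n$; it does not vanish along $\omega$ for fixed $\varepsilon$, as you wrote. A uniform-in-$n$ bound is exactly what your $\epsilon$-test in Step 3 needs.
\item The correction to exact order zero maps with values in $\her(h_\varepsilon(e_n))$ is weak stability of order zero maps on finite dimensional domains. This follows from projectivity of $C_0((0,1])\otimes F^{(i)}$, with constants independent of the target. Theorem \ref{T:semiprojective} by itself is only the cone correspondence.
\item For the lemma as literally stated, with $\beta_n$ merely c.p., the cut-down alone already suffices. The perturbation to order zero is only needed for the application.
\end{enumerate}

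There is also a shorter route to the same result. We have $\Gamma^{\omega}(A)\subset\her(\Gamma^{\omega}(1_A))$ and $\beta^{\omega}(F)\subset\her(\beta^{\omega}(1_F))$, and these two positive elements are orthogonal. Lift $\Gamma^{\omega}(1_A)-\beta^{\omega}(1_F)$ to a self-adjoint sequence $(c_n)$. Then lift $\Gamma^{\omega}$ by Choi--Effros into $\prod_n\her((c_n)_+)$, whose image in $A^{\omega}$ contains $\her(\Gamma^{\omega}(1_A))$. Lift each order zero piece of $\beta^{\omega}$ by projectivity into $\prod_n\her((c_n)_-)$. Exact orthogonality is then automatic, and no $\epsilon$-test is needed.
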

\begin{proof}
The idea of the proof is to combine Lemma \ref{L:Kirchberg} and the cut down by a support projection which plays like the identity in the multiplication. The detail is as follows. 
Write $\Gamma^{\omega}=[(\Gamma_n)]$ and $\beta^{\omega}=[(\beta_n)]$. From the given condition, 
\begin{equation} 
\lim_{n \to \omega} \|\Gamma_n(a)\beta_n(e^i_{jk})\|=0 
\end{equation}
where for all $a \in A$, $\{e^{i}_{jk}\}$ are matrix units for $F^i$ for $F=\oplus_{i=1}^m F^i$. 
To employ Kirchberg $\epsilon$-test to find a sequence of c.p. maps, we need to set $X_n=\{ \text{c.p.c maps from $A$ to $A$}\}$ for each $n$.  Let $S_A=\{ a_j  \mid  j=1,2, \dots\}$ be a countable dense subset of $A$, $B_F=\{ e^{i}_{k,l} \mid i, j, k\}$ a basis of  $F$. To satisfy two conditions (1) and (2), we need two functions measuring the lifting and orthogonality and merge them into one. For $k, n \in \mathbb{N}$ we define 
 \[ f^{(k)}_n(\psi):= \|\psi(a_k)-\Gamma_n(a_k) \| + \sum_{b \in B_F } \| \psi(a_k)\beta_n(b)\| \] Note that the latter is a finite sum. \\
 Now, given $m\in \mathbb{N}$ and $\varepsilon >0$, we set $h_n=\beta_n(\mathbf{1_F}) \in A$. Then $H:=\her (h_n)$ and    $g_n:=(f_\sigma(h_n))^{1/2} \in H$, where
\[
f_\sigma(t)=
\begin{cases}
0,& t=0,\\
\dfrac{(t-\sigma)_+}{t},& t > 0.
\end{cases}
\] and  $\sigma$ will be chosen later.  

    Let $\Gamma^1_{n}(x) = (1 - g_n) \Gamma_n(x) (1 - g_n)$. We estimate the distance from the initial lift $\Gamma_n$: 
  For $x\in A$ we have \[ \Gamma_n(x^*)\Gamma_n(x) \le \|\Gamma_n(1)\| \Gamma_n(x^*x).\]
    Taking the norm both sides, we have 
    \[ \|\Gamma_n(x) \|^2 \le \| \Gamma_n(1)\|\|x\|^2 \]
    since $\Gamma_n$ is a c.p.c. map. Then 
    \begin{align*}
    \|\Gamma^1_{n}(x) - \Gamma_n(x)\| &\le \|g_n \Gamma_n(x)\| + \|(1-g_n) \Gamma_n(x) g_n\| \\
    &\le \|x\| \|\Gamma_n(1_A)^{1/2} g_n\| + \|1-g_n\| \|x\| \|\Gamma_n(1_A)^{1/2} g_n\| \\
    &\le 2 \|x\| \|g_n \Gamma_n(1_A) g_n\|^{1/2}
    \end{align*}
    Since $\Gamma^{\omega} \perp \beta^{\omega}$, we have $\lim_{n \to \omega} \| \Gamma_n(1_A) \beta_n(1_F) \| = 0$. Because $g_n$ is supported on $\beta_n(1_F)$, it follows that $\lim_{n \to \omega} \|g_n \Gamma_n(1_A) g_n\| = 0$.  Thus  
    \[ \lim_{n \to \omega}\|\Gamma^1_{n}(x) - \Gamma_n(x)\|=0. \]

    For any $b \in F$, the second term is controlled by the interaction of $(1-g_n)$ and the image of $\beta_n$:
    \begin{align*}
    \|\Gamma_{n, \perp}(x) \beta_n(b)\| &= \|(1-g_n) \Gamma_n(x) (1-g_n) \beta_n(b)\| \\
    &\le \|\Gamma_n(x)\| \|(1-g_n) \beta_n(b)\|
    \end{align*}
    By the definition of $g_n = f(\beta_n(\mathbf{1_F}))$ and the property of the functional calculus $(\sqrt{t} - \sqrt{t-\sigma}) \le \sqrt{\sigma}$, we have:
    \[ \|(1-g_n) \beta_n(b)\| \le \sqrt{\sigma} \|b\|. \]
  If we choose $\sigma>0$ such that $\sigma \le \left( \frac{\varepsilon}{\sum_{l=1}^M \|b_l \|}\right)^2$, 
    \[ \lim_{n \to \omega} \sum_{l=1}^M \|\Gamma^1_n(a_j)\beta_n(b_l)\| \le \sqrt{\sigma} \sum_{ l =1}^{M} \|b_l\| < \varepsilon. \]
 Thus, $f^{(k)}_{\omega}((\Gamma^1_n)) < \varepsilon$ for $k=1, \dots, m$. By the Kirchberg $\varepsilon$-test, there exists a sequence $(\Gamma_n)_{n=1}^{\infty}$ of c.p.c. maps such that $f^{(k)}_{\omega}((\Gamma_n)) = 0$ for all $k$, satisfying both the lifting and the limit orthogonality requirements. \\
 
To achieve exact orthogonality at the sequence level, we perform a final compression as follows:
Let $e_n \in A^{**}$ be the support projection of $h_n$ which comes from $1_{(0, 1]}(h_n)$ by Borel functional calculus. Since $F$ is finite dimensional, $(1-e_n)$ acts as the strict annihilator of the image of $\beta_n$.
Note that $H_n:=\her^{\perp}(h_n)$ is also a hereditary $C\sp*$-subalgebra of $A$ corresponding to the open projection $(1-e_n)$, in particular norm closed \cite{P} and $\beta_n(F) \subset \her(b_n)$. 
For each $n$, let $\mc{F}_n:=\{a_1, \dots, a_n\} \subset  S_A$.  

Since \[ \lim_{n \to \omega} \dist(\Gamma_n(a_i), H_n)=0, \]
 we consider $y_{n, i} \in H_n$ such that $\| \Gamma_n(a_i) -y_{n, i}\| < \delta_n$ where $\delta_n \to 0$ along the ultrafilter $\omega$.   We let $(u_{n,k})_{k=1}^{\infty}$ be a quasi-central approximate unit in $H_n$.   Then 
 \[\lim_{k \to \infty}\max_{1\le  i \le n}\{ \|u_{n,k}y_{n,i}u_{n,k} -y_{n,i}\|\} =0\]
 If we take $k=k(n)$ large enough  such that $\|u_{n,k(n)} y_{n,i} u_{n,k(n)}- y_{n,i} \| < \delta_n$ for $1 \le i \le n$, 
 \[\begin{split}
 \max_{i \le n} \|u_{n,k(n)}\Gamma_n(a_i)u_{n,k(n)}  - \Gamma_n(a_i)\| & \max_{i \le n}\le \| u_{n,k(n)}\Gamma_n(a_i)u_{n,k(n)} - u_{n,k(n)}y_{n,i}u_{n,k(n)} \|  \\
  &+ \max_{i \le n}\| u_{n,k(n)}y_nu_{n,k(n)}- y_{n,i} \|+ \max_{i \le n}\|y_{n,i} -\Gamma_n(a_i) \| \\
  &\le 3 \delta_n.
  \end{split}
  \] Thus 
    \[\lim_{n\to \omega} \max_{1 \le i \le n} \| u_{n,k(n)}\Gamma_n(a_i)u_{n,k(n)} - \Gamma_n(a_i) \|=0. \]  Note that $u_{n, k(n)}\Gamma_n(x) u_{n, k(n)} \in H_n$,  and thus $u_{n, k(n)}\Gamma_n(x) u_{n, k(n)}\beta_n(F)=0$.  By a standard $3\epsilon$-argument over the dense sequence, we obtain a sequence of c.p.c. maps $\tilde{\Gamma}_n(x) := u_{n,k(n)} T_n(x) u_{n,k(n)}$ mapping $A$ into $H_n$ which satisfies $[\tilde{\Gamma}_n] = [T_n] = \Gamma^{\omega}$.
\end{proof}

\begin{thm}\label{T:Trdim}
Let $A$ be a nuclear simple unital infinite dimensional $C\sp*$-algebra and $B$ be a simple nuclear unital $C\sp*$-algebras and $\phi: A \to B $ a $^*$-homomorphism which is tracially sequentially-split by order zero. If $\Trdim_{\nuc} B \le m $, then $\Trdim_{\nuc} A \le m$. 
\end{thm}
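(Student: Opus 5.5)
We fix a finite subset $\mc{F}\subset A$, $\epsilon>0$ and a nonzero $a\in A_+$, and look for the approximation required by the definition of $\Trdim_{\nuc} A \le m$. The approach is to build it in $A^{\omega}$ by transporting an approximation of $B$ through $\psi$, and then pull it back to $A$ using Lemma~\ref{L:orthogonallift} and the Kirchberg $\epsilon$-test (Lemma~\ref{L:Kirchberg}).

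\emph{Step 1: small elements.} Since $A$ is simple, unital and infinite dimensional, we choose nonzero positive elements $a_1,a_2\in A$ that are mutually orthogonal and satisfy $a_1\oplus a_2\lesssim a$. We regard $a_1$ as a nonzero element of $A^{\omega}$ (constant sequence). The tracial splitting gives an order zero map $\psi:B\to A^{\omega}$ and a contraction $g\in A^{\omega}\cap A'$ with $\psi\circ\phi(x)=xg$ for $x\in A$ and $1-g\lesssim a_1$.

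\emph{Step 2: approximate in $B$ and push forward.} We apply $\Trdim_{\nuc}B\le m$ to the finite set $\phi(\mc{F})\cup\{1_B\}$, a tolerance $\delta$, and a positive element $b\in B$ chosen as follows. Since $\phi$ is unital, $\psi(1_B)=g$. By the structure theorem (Theorem~\ref{T:semiprojective}), $\psi(c)\lesssim \phi$-preimage data in a controlled way, so we would take $b=\phi(a_2)$ (or a suitable cut-down of it) so that $\psi(b)=a_2g\lesssim a_2$. This yields $\alpha_B:B\to F$, $\beta_B:F\to B$ piecewise contractive and $m$-decomposable, and $\gamma_B:B\to B\cap\beta_B(F)^{\perp}$ with $\gamma_B(1)\lesssim b$. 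We then set
\[
\alpha=\alpha_B\circ\phi,\qquad \beta^{\omega}=\psi\circ\beta_B,\qquad \Gamma^{\omega}(x)=\psi(\gamma_B(\phi(x)))+(1-g)^{1/2}x(1-g)^{1/2}.
\]
Order zero maps preserve orthogonality, so $\psi\circ\gamma_B\perp\psi\circ\beta_B$, and $\beta^{\omega}$ is still piecewise contractive and $m$-decomposable (each summand stays order zero). Using $x=xg+x(1-g)$ and the centrality of $g$, we get $x\approx_\delta \Gamma^{\omega}(x)+\beta^{\omega}\alpha(x)$. Moreover $\Gamma^{\omega}(1)\lesssim \psi(\gamma_B(1))\oplus(1-g)\lesssim a_2\oplus a_1\lesssim a$.

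\emph{Step 3: the main obstacle.} The corner term $(1-g)^{1/2}x(1-g)^{1/2}$ is not orthogonal to $\beta^{\omega}(F)$. This is the step I expect to be the main obstacle. The remedy I would try first is to replace it by $(1-g)^{1/2}(1-e)x(1-e)(1-g)^{1/2}$ built from a cut-down $e=f_\sigma(\beta^{\omega}(1_F))$, exactly as in the proof of Lemma~\ref{L:orthogonallift}. Alternatively, one can absorb the tracially small part into the unperturbed error by exploiting $g\,\beta^{\omega}(1_F)\approx\beta^{\omega}(1_F)$, which holds up to an error controlled by $\psi(1-\beta_B(1))$. With this correction we obtain $\Gamma^{\omega}:A\to D^{\omega}$ c.p.c.

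\emph{Step 4: pull back to $A$.} Lemma~\ref{L:orthogonallift} produces c.p.c.\ maps $\Gamma_n:A\to A\cap\beta_n(F)^{\perp}$ lifting $\Gamma^{\omega}$. The Cuntz relation $\Gamma^{\omega}(1)\lesssim a$ in $A^{\omega}$ gives, after passing to $(\cdot-\eta)_+$ and using Lemma~\ref{lem:cuntz_properties}(10),(11), that $\Gamma_n(1)\lesssim a$ for $\omega$-many $n$; the Kirchberg test lets us adjust $\Gamma_n$ so this holds exactly. Norm approximation on $\mc{F}$ also passes to $\omega$-many $n$. Picking one such $n$ gives $F$, $\alpha$, $\beta=\beta_n$ and $\gamma=\Gamma_n$ witnessing $\Trdim_{\nuc}A\le m$.
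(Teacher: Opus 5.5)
There is a genuine gap in Step 3, exactly where you expect it, and neither of your remedies closes it. Your Steps 1, 2 and 4 follow the paper's proof closely: it takes $b_0=f(\phi(a_2))$, uses the corner term $f_\eta(1-g)x$, lifts via Lemma~\ref{L:orthogonallift}, and then picks one index $n$.

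Write $\psi(y)=g\,\pi_\psi(y)=\pi_\psi(y)\,g$, with $\pi_\psi$ the supporting $*$-homomorphism of Theorem~\ref{T:semiprojective}. Then
\[
(1-g)\,\beta^{\omega}(1_F)=g(1-g)\,\pi_\psi(\beta_B(1_F)),
\]
and this need not be small in norm. For instance, if $\beta_B(1_F)\ge c1_B$, this element dominates $c\,g(1-g)$. Definition~\ref{D:sequentiallysplitbyorderzero} only gives $1-g\lesssim z$, so nothing prevents $\|g(1-g)\|=1/4$. The corner term is therefore not even approximately orthogonal to $\beta^{\omega}(F)$; the overlap is only tracially small.

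\emph{Remedy (a).} It fails twice:
\begin{enumerate}
\item $e=f_\sigma(\beta^{\omega}(1_F))$ is not a projection. So $(1-e)\beta^{\omega}(1_F)$ has norm at most $\sigma$ but is not zero, and the new map does not take values in $D^{\omega}$ as Lemma~\ref{L:orthogonallift} requires.
\item The replacement changes the corner term by $(1-g)^{1/2}\bigl(x-(1-e)x(1-e)\bigr)(1-g)^{1/2}$. This is not norm-small: $e$ is close to $1$ where the spectrum of $\beta^{\omega}(1_F)$ lies well above $\sigma$, and that region overlaps $1-g$ by the identity above. So condition (1) is destroyed.
\end{enumerate}

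\emph{Remedy (b).} It rests on a wrong estimate. $\beta^{\omega}(1_F)-g\beta^{\omega}(1_F)$ is the same element $g(1-g)\pi_\psi(\beta_B(1_F))$, so it is controlled by $g(1-g)$, not by $\psi(1-\beta_B(1_F))$. A tracially small discrepancy cannot be absorbed into the norm error of (1). It can only be put into $\gamma$, which is exactly where exact orthogonality is demanded.

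Consequently the hypothesis $\Gamma^{\omega}(A)\subset D^{\omega}$ of Lemma~\ref{L:orthogonallift} is never established, and Step 4 cannot start. What is missing is a real device for the spectral region $0<g<1$. Replacing $\psi$ by $f(\psi)$ for some function $f$ only moves the overlap to the region $0<f(g)<1$.

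The paper does not supply this device either, so you cannot simply import its step. It argues that $\beta^{\omega}(1_F)\in\her(g)$ forces $\beta^{\omega}(1_F)(1-g)=0$, and hence $f_\eta(1-g)x\in\her^{\perp}(\beta^{\omega}(1_F))$. That implication holds only when $g$ is a projection, and the display above shows it fails in general.

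A smaller point in Step 4. From $\Gamma^{\omega}(1)\lesssim a$ in $A^{\omega}$ you only get $(\Gamma_n(1)-\sigma)_+\lesssim a$ for $\omega$-many $n$. The Kirchberg $\epsilon$-test cannot upgrade this to an exact relation at a finite stage, because it only yields statements along $\omega$. The paper's fix works here:
\begin{enumerate}
\item Replace $\Gamma_n$ by $x\mapsto k_\sigma\Gamma_n(x)k_\sigma$, where $k_\sigma=f_\sigma(\Gamma_n(1))^{1/2}$ and $f_\sigma(t)=(t-\sigma)_+/t$.
\item Then $\gamma(1)=(\Gamma_n(1)-\sigma)_+\lesssim a$.
\item The range stays orthogonal to $\beta_n(F)$, because $k_\sigma\in C^*(\Gamma_n(1))$.
\item The norm cost is at most $2\sqrt{\sigma}\|x\|$.
\end{enumerate}
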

\begin{proof} 
Given a finite set $\mc{F} \subset A$, $\epsilon>0$, $a_0 \in A_{+} \setminus \{ 0\}$, consider two nonzero positive elements $a_1, a_2 \in \her(a_0)$ such that $a_1 \perp a_2$ and $a_1 \sim a_2$.  consider a function $h$ defined by 
\[
 h(x)=\begin{cases}
 0 \quad & \text{if $0 \le x \le \eta$}\\
 \frac{x-\eta}{\eta} \quad & \text{if $\eta \le x \le 2\eta$}\\
 1 \quad &\text{if $x \ge 2 \eta$}
  \end{cases}
\] 
where $\eta >0$ is decided later.  Let $z:= h(a_1) \in A^{+}$. 
Then using the assumption we can take a c.p.c. order zero map $\psi: B \to A^{\omega}$  and $\psi(\mathbf{1_B})=g \in A^{\omega}\cap A'$ such that $\psi(\phi(a))=ag$,  $1- g \lesssim z \lesssim a_1$ since $z \in \her(a_1)$. 
We also consider $0< \delta < \epsilon$ which will be decided later.  Let $\mc{G}=\phi(\mc{F}) \subset B$, $\epsilon/6$, and $b_0=f(\phi(a_2))$ where $f\ge 0$ is a continuous function such that $f(0)=0, f(t)> 0$  for $t>0$. 
Since $\Trdim_{\nuc} B \le m$, there exist a finite dimensional $C\sp*$-algebra $F_0$, a c.p.c. map $\alpha_B: B \to F$, a nonzero piecewise $m$-decomposable c.p. map $\beta_B: F \to B$, a c.p.c. map $\gamma_B: B \to B\cap \beta(F)^{\perp}$, such that  
 \begin{enumerate}
\item $ \|y - (\gamma_B(y)+ \beta_B \circ \alpha_B (y) )\| < \epsilon/6$ for all $y\in \mc{G}$, and 
\item $\gamma_B(\mathbf{1_B}) \lesssim b_0$ in $A$. 
\end{enumerate}
Combining $\phi$ and $\psi$ with these maps, we define $\alpha: A \to F$ by $\alpha:= \alpha_B \circ \phi$,  $\beta^{\omega}: F \to A^{\omega}$ by $\beta^{\omega}:=\psi \circ \beta_B$, and $\gamma^{\omega}: A \to A^{\omega}$ by $\gamma^{\omega}:=\psi \circ \gamma_B \circ \phi$. Since $\psi$ is of order zero, it follows that \[\gamma^{\omega}(A) \perp \beta^{\omega}(F). \]

From (1), for any  $y \in \mc{G}$  
\begin{equation}
\| \psi(y) -(\psi(\gamma_B (y))+ \psi (\beta_B(\alpha_B(y)))) \| < \frac{\epsilon}{6}.
\end{equation}
So, for  any $x \in \mc{F}$
\begin{equation}
\| \psi(\phi(x)) -(\psi(\gamma_B (\phi(x)))+ \psi (\beta_B(\alpha_B(\phi(x))))) \| < \frac{\epsilon}{6}
\end{equation}
Thus, for  any $x \in \mc{F}$
\begin{equation}
\| xg -(\gamma^{\omega} (x)+  \beta^{\omega} (\alpha(x)) ) \| < \frac{\epsilon}{6}
\end{equation}
If we write $x=xg+ x(1-g)$, we have now the new remainder $\gamma^{\omega}(x) + x(1-g)$ replacing the role of $\gamma$ in the original definition. The problem is then the map $x \mapsto \gamma^{\omega}(x)+ x(1-g) \in A^{\omega}$ may not be orthogonal to $\beta^{\omega}(F)$. Let $b:=\beta^{\omega}(\mathbf{1_F}) \in A^{\omega}_{+} \setminus \{0\}$ and  $e=s(b)$ the support of $b$ in $(A^{\omega})^{**}$. It is well known that $\her(b)= A^{\omega}\cap e(A^{\omega})^{**}e$    and $\her^{\perp}(b)= A^{\omega}\cap (1-e)(A^{\omega})^{**}(1-e)= \overline{\{x \in A^{\omega} \mid xb=bx=0 \}}^{\| \,\|}$ due to Akemann and Pedersen. The latter is also a hereditary $C\sp*$-subalgebra.  
We note that $b$ is in the range of $\psi$, which means that $b \in \her(\psi(\mathbf{1_B}))=\her(g)$, and thus  $b=gc^{**}g$ for some $c^{**} \in (A^{\omega})^{**}$.  It follows that $b(1-g)=(1-g)b=0$. In fact, for every $x\in A$ $x(1-g)=(1-g)x \in \her^{\perp}(b)$ and in particular $1-g \in \her^{\perp}(b)$.  If we let $f_{\eta}(t)=\max\{0, t-\eta\}$, then $q:= f_{\eta}(1-g) \in \her^{\perp}(b)$ and $\| (1-g) -q\| \le \eta$. If we define $R^{\omega}(x):=q^{1/2}xq^{1/2}=qx$ for $x\in A$ and set $\eta= \frac{\epsilon}{ 6 \max\{ \|x\| \mid x\in \mc{F}\}}$, we can easily deduce that 
\begin{equation}
\| x - (\gamma^{\omega}(x) + R^{\omega}(x) + (\beta^{\omega}\circ \alpha)(c)) \| < \frac{\epsilon}{3}
\end{equation}
for all $x\in \mc{F}$. 
Moreover,  it is not hard to show that $R^{\omega}(x)\in \her^{\perp}(b)$ and $\Image( \beta^{\omega} )\subset \her(b)$. Therefore $R^{\omega}(A) \perp \beta^{\omega}(F)$.\\

On the other hand,  since $\psi$ is of order zero, 
\[
\gamma^{\omega}(\mathbf{1_A})=\psi(\gamma_B(\mathbf{1_B})) \lesssim \psi(f(\phi(a_2))) = f(a_2 g) \in \her(a_2). 
\]
Consequently, 
\begin{equation}\label{E:Cuntz1}
\gamma^{\omega}(\mathbf{1_A}) \lesssim a_2.
\end{equation}
Also, 
\begin{equation}\label{E:Cuntz2}
R^{\omega}(\mathbf{1_A})=q \le1-g \lesssim a_1
\end{equation}
So if we define $\Gamma^{\omega}:= \gamma^{\omega}+ R^{\omega}$, we obtain the following; 
\begin{equation}\label{E:condition(1)}
\|.x -(\Gamma^{\omega}(x)+ \beta^{\omega} \circ \alpha(x))  \| < \frac{\epsilon}{3}
\end{equation} 
\begin{equation}\label{E:condition(2)}
\Gamma^{\omega}(A) \perp \beta^{\omega}(F)
\end{equation}
\begin{equation}\label{E:condition(3)}
\Gamma^{\omega}(\mathbf{1_A}) \lesssim a_1 +a_2 \lesssim a_0  
\end{equation}
in $A^{\omega}$.\\  
Since $A$ nuclear, we can lift $\Gamma^{\omega}$ to $\Gamma: A \to \prod_{i=1}^{\infty} A$ and write $\Gamma= \prod_{n=1}^{\infty} \Gamma_n $ where $\Gamma_n: A \to A$ is a c.p. map by Choi-Effros. In addition, since $F$ is also nuclear, we can lift $\beta^{\omega}$ to $\widetilde{\beta}: F \to \prod_{n=1}^{\infty} A$ and write $\widetilde{\beta}=\prod_{n=1}^{\infty} \beta_n$ where $\beta_n: F \to A$ is a c.p.c map.    When we write $F=\oplus_{i=1}^m F_i $, the restriction map $\beta^{\omega}|_{F_i}$ is c.p.c order zero map for each $i$, we can write the c.p.c map $\beta_n=\sum_i \beta_{n, i}$ where $\beta_{n, i}: F_i \to A$ is also c.p.c order zero map. Combining (\ref{E:condition(2)}) with  Lemma \ref{L:orthogonallift}, we assume that $\Gamma_n(A) \perp \beta_n(F)$ for all $n$.\\  
\noindent
 From (\ref{E:condition(3)}), for each $k \in \mathbb{N}$, there exist $r^{(k)}=[(r^k_n)_{n=1}^{\infty}] \in A^{\omega}$ such that 
 \[  \| (r^{(k)})^* a_0 r^{(k)} - \Gamma(\mathbf{1_A}) \|  < \frac{1}{2^k} \quad \text{ in. $A^{\omega}$}. \] 
 Equivalently, 
 \[ \lim_{n \to \omega} \| (r^k_n)^* a_0 r^k_n - \Gamma_n (\mathbf{1_A}) \|_A < \frac{1}{2^k} \] 
 Therefore, $S_k:= \{j \in \mathbb{N}\mid \| (r^k_j)^* a_0  r^k_j - \Gamma_j (\mathbf{1_A})\|_A < \frac{1}{2^k}  \} \in \omega$. Note that $S_k$ has infinitely many elements for each $k$. \\
\noindent
For each $x \in \mc{F}$, from (\ref{E:condition(1)}), 
\[ \lim_{n\to \omega} \| x -(\Gamma_n(x)+ \beta_n \circ \alpha (x) )  \| < \frac{\epsilon}{3}. \]
Hence the set $U_x=\{n \mid \| x -(\Gamma_n(x)+ \beta_n \circ \alpha (x) )  \| < \frac{\epsilon}{3} \} \in \omega$. 
 Let
\[
U:=\bigcap_{x\in\mc{F}} U_x \in\omega.
\]
\noindent
For an arbitrary $\sigma >0$ (to be chosen later) and choose $k\in\mathbb N$ such that $2^{-k}<\sigma$. Since $U\in\omega$ as well, we have $U\cap S_k\in\omega$,
so we may pick $n\in U\cap S_k$. Then
\[
\|(r^{(k)}_{n})^*a_0 r^{(k)}_{n}-\Gamma_{n}(\mathbf{1_A})\|<2^{-k}<\sigma.
\]

Since $(r^{(k)}_{n})^*a_0 r^{(k)}_{n}\precsim a_0$ in $A$, the standard
perturbation-to-Cuntz-sub\-equivalence estimate yields
\[
(\Gamma_{n}(\mathbf{1_A})-\delta_1)_+\ \precsim\ (r^k_{n})^*a_0 r^k_{n}
\ \precsim\ a_0.
\]
Moreover, 
\begin{equation}\label{E:normestimate}
\| x -(\Gamma_{n}(x)+ \beta_{n} \circ \alpha (x) )  \| < \frac{\epsilon}{3}. 
\end{equation}
Now we set $\beta=\beta_n$, $\Gamma=\Gamma_n$ for our choice $n$. 

Now we need to deform $\Gamma$ to $\gamma$ such that $\gamma(\mathbf{1_A})= (\Gamma(\mathbf{1_A})-\sigma)_{+}$ and $\| \gamma(x) -\Gamma(x)\| < \dfrac{\epsilon}{3}$ for all $x \in \mc{F}$.  The following method might be known to expects;   
 First,  let 
\[H:=\her(h) \subset D_n=A\cap \beta_n(F)^{\perp} \] where $\Gamma(\mathbf{1_A})=h$.  Let $k_\sigma:=(f_\sigma(h))^{1/2} \in H$, where
\[
f_\sigma(t)=
\begin{cases}
0,& t=0,\\
\dfrac{(t-\sigma)_+}{t},& t > 0.
\end{cases}
\]
We define
\[
 \gamma(x):=k_{\sigma}\,\Gamma(x)\,k_{\sigma} \quad (x\in A).
\]

(1) Since $\gamma$ is obtained from $\Gamma$ by two-sided multiplication with a contraction,
it is completely positive and contractive and $k_{\sigma} \in H \subset D_n$ implies that $\gamma(A) \subset D_n$. 

\smallskip
(2) Since $k_\sigma$ commutes with $h$, we have
\[
\gamma(1_A)=k_\sigma\,h\,k_\sigma=g_\sigma(h)\,h.
\]
For  $t\ge 0$ one has $g_\sigma(t)\,t=(t-\sigma)_+$, hence by functional calculus
\[
\gamma(1_A)=(h-\sigma)_+.
\]

\smallskip
(3) Fix $x\in A$.
Let $\Gamma(x)=V^*\pi(x)V$ be a Stinespring dilation, so that $h=\Gamma(1_A)=V^*V$.
For $\varepsilon>0$ define the bounded operator
\[
W_\varepsilon:=V\,(h+\varepsilon)^{-1/2}.
\]
Then
\[
W_\varepsilon^*W_\varepsilon
=(h+\varepsilon)^{-1/2}V^*V(h+\varepsilon)^{-1/2}
=(h+\varepsilon)^{-1/2}h(h+\varepsilon)^{-1/2}
\le 1,
\]
so $\|W_\varepsilon\|\le 1$.
Set
\[
y_{x,\varepsilon}:=W_\varepsilon^*\pi(x)W_\varepsilon,
\]
which satisfies $\|y_{x,\varepsilon}\|\le \|x\|$.

Define
\[
s_\varepsilon:=h^{1/2}(h+\varepsilon)^{-1/2}\in C^*(h),\qquad 0\le s_\varepsilon\le 1.
\]
A direct computation gives
\[
s_\varepsilon\,\Gamma(x)\,s_\varepsilon
= h^{1/2}y_{x,\varepsilon}h^{1/2}.
\]
Moreover, $s_\varepsilon\to 1$ strongly on $\overline{\mathrm{Ran}(h)}$, and hence
\[
\|s_\varepsilon\Gamma(x)s_\varepsilon-\Gamma(x)\|\xrightarrow[\varepsilon\downarrow 0]{}0.
\]

Since $k_\sigma$ commutes with $h$ and
\[
k_\sigma h^{1/2}=(h-\sigma)_+^{1/2}
\]
by functional calculus, we have
\[
\gamma(x)
=k_\sigma\,h^{1/2}y_{x,\varepsilon}h^{1/2}k_\sigma
=(h-\sigma)_+^{1/2}y_{x,\varepsilon}(h-\sigma)_+^{1/2}.
\]

Let $d:=h^{1/2}-(h-\sigma)_+^{1/2}$. Then
\[
h^{1/2}y_{x,\varepsilon}h^{1/2}
-(h-\sigma)_+^{1/2}y_{x,\varepsilon}(h-\sigma)_+^{1/2}
= d\,y_{x,\varepsilon}\,h^{1/2}
+(h-\sigma)_+^{1/2}y_{x,\varepsilon}\,d.
\]
Taking norms and using $\|h^{1/2}\|\le 1$ and $\|(h-\sigma)_+^{1/2}\|\le 1$,
\[
\|h^{1/2}y_{x,\varepsilon}h^{1/2}
-(h-\sigma)_+^{1/2}y_{x,\varepsilon}(h-\sigma)_+^{1/2}\|
\le 2\|d\|\,\|y_{x,\varepsilon}\|.
\]
For  $t\in[0,1]$ one has
\[
0\le \sqrt{t}-\sqrt{(t-\sigma)_+}\le \sqrt{\sigma},
\]
hence $\|d\|\le \sqrt{\sigma}$ by functional calculus.
Since $\|y_{x,\varepsilon}\|\le \|x\|$, we obtain
\[
\|h^{1/2}y_{x,\varepsilon}h^{1/2}
-(h-\sigma)_+^{1/2}y_{x,\varepsilon}(h-\sigma)_+^{1/2}\|
\le 2\sqrt{\sigma}\,\|x\|.
\]

Letting $\varepsilon\downarrow 0$ yields
\[
\|\Gamma(x)-\gamma(x)\|\le 2\sqrt{\sigma}\,\|x\|,
\]
as required.
Now if $M:=\max\{ \|x\| \mid x \in \mc{F}\}> 0$, and we choose $0< \sigma < \frac{\varepsilon^2}{144M^2}$, then

\begin{equation}\label{E:perturbation}
\|\Gamma(x)-\gamma(x)\|<\frac{\varepsilon}{6}
\end{equation}
for all $x \in \mc{F}$. 
Combining two estimates (\ref{E:normestimate}) and (\ref{E:perturbation}) we obtain 
\[ 
\| x - (\gamma(x) + \beta \circ \alpha (x))\| < \frac{\epsilon}{2}< \epsilon
\]
for all $x \in \mc{F}$. 
Thus for given a finite set $\mc{F}$, $\varepsilon>0$, a nonzero positive element $a_0 \in A$, we have shown that there exist a finite dimensional $C\sp*$-algebra $F$, a c.p.c. map $\alpha: A \to F$, a nonzero piecewise contractive $m$-decomposable c.p. map $\beta: F \to A$, and a c.p.c map $\gamma: A \to A\cap \beta(F)^{\perp}$, such that 
\begin{enumerate}
\item $ \|x - (\gamma(x)+ \beta \circ \alpha (x)) \| < \epsilon$ for all $x\in \mc{F}$, and 
\item $\gamma(\mathbf{1_A}) \lesssim a_0$ in $A$. 
\end{enumerate}
Hence $\Trdim_{\nuc}A \le m$. 
\end{proof}
\begin{rem}
The conclusion of the proof of Theorem \ref{T:Trdim} signifies the completion of the tracial permanence program initiated in [8]. By establishing the inheritance of tracial nuclear dimension, we have shown that all three fundamental regularity properties—$m$-comparison, $m$-almost divisibility, and nuclear dimension—behave consistently within the framework of tracial sequential-splitness. In the broader context of the Toms-Winter conjecture, these results confirm that the tracial versions of the regularity pillars are robust under the unified framework covering both weak tracial Rokhlin actions and inclusions.
\end{rem}

\end{document}